\numberwithin{equation}{section}
\numberwithin{figure}{section}
\theoremstyle{plain}
\newtheorem{thm}{\protect\theoremname}[section]
\newtheorem*{thm*}{\protect\theoremname}
  \theoremstyle{plain}
  \newtheorem{lem}[thm]{\protect\lemmaname}
  \theoremstyle{definition}
  \newtheorem{defn}[thm]{\protect\definitionname}
  \theoremstyle{definition}
  \newtheorem{notation}[thm]{\protect\notationname}
  \theoremstyle{plain}
  \newtheorem{prop}[thm]{\protect\propositionname}
  \theoremstyle{plain}
  \newtheorem{cor}[thm]{\protect\corollaryname}
  \theoremstyle{remark}
  \newtheorem{rem}[thm]{\protect\remarkname}
  \theoremstyle{definition}
  \newtheorem{example}[thm]{\protect\examplename}
 \theoremstyle{definition}
  \newtheorem{conv}[thm]{\protect\conventionname}
  \providecommand{\corollaryname}{Corollary}
  \providecommand{\definitionname}{Definition}
  \providecommand{\examplename}{Example}
  \providecommand{\lemmaname}{Lemma}
  \providecommand{\notationname}{Notation}
  \providecommand{\propositionname}{Proposition}
  \providecommand{\remarkname}{Remark}
\providecommand{\theoremname}{Theorem}
\providecommand{\conventionname}{Convention}
\newcommand{\CC}[0]{\ensuremath{\mathbb{C}}}
\newcommand{\ZZ}[0]{\ensuremath{\mathbb{Z}}}
\newcommand{\GM}[0]{\ensuremath{\mathbb{G}_{m,\mathbb{C}}}}
\newcommand{\RR}[0]{\ensuremath{\mathbb{R}}}
\newcommand{\QQ}[0]{\ensuremath{\mathbb{Q}}}
\newcommand{\KK}[0]{\ensuremath{\mathrm{k}}}
\newcommand{\OO}[0]{\ensuremath{\mathcal{O}}}
\newcommand{\DD}[0]{\ensuremath{\mathcal{D}}}
\newcommand{\spec}[0]{\ensuremath{\operatorname{Spec}}}
\newcommand{\Aut}[0]{\ensuremath{\operatorname{Aut}}}
\newcommand{\divi}[0]{\ensuremath{\operatorname{div}}}
\newcommand {\flip}[1]{\widehat{#1}}
\begin{document}
\author{Adrien Dubouloz} \address{IMB UMR5584, CNRS, Univ. Bourgogne
  Franche-Comt\'e, F-21000 Dijon, France.}
\email{adrien.dubouloz@u-bourgogne.fr}

\author{Alvaro Liendo} \address{Instituto de Matem\'atica y F\'\i
  sica, Universidad de Talca, Casilla 721, Talca, Chile}
\email{aliendo@inst-mat.utalca.cl}

\thanks{{\it 2010 Mathematics Subject
    Classification}:  14P05, 14L30.\\
  \mbox{\hspace{11pt}}{\it Key words}: Circle actions, torus actions,
  real varieties.\\
  \mbox{\hspace{11pt}}The first author was partially supported by ANR
  Project FIBALGA ANR-18-CE40-0003-01 and the second author was
  partially supported by Fondecyt project 11121151 and by the grant
  346300 for IMPAN from the Simons Foundation and the matching
  2015-2019 Polish MNiSW fund}

\title{Normal real affine varieties with circle actions}
\begin{abstract}
  We provide a complete description of normal affine algebraic
  varieties over the real numbers endowed with an effective action of
  the real circle, that is, the real form of the complex
  multiplicative group whose real locus consists of the unitary circle
  in the real plane.  Our approach builds on the
  geometrico-combinatorial description of normal affine varieties with
  effective actions of split tori in terms of proper polyhedral
  divisors on semiprojective varieties due to Altmann and Hausen.
\end{abstract}

 \dedicatory{Dedicated to Mikhail Zaidenberg on his 70th birthday}
\maketitle

\section*{Introduction}

Normal algebraic varieties $X$ over a field $k$ endowed with actions
of split tori $\mathbb{T}=\mathbb{G}_{m,k}^n$ are quite well
understood in terms of various geometrico-combinatorial
presentations. The case where $\mathbb{T}$ acts faithfully on $X$ and
$\dim(\mathbb{T})=\dim(X)$ is known as toric variety and was first
studied by Demazure in \cite{De70}. These varieties are fully
described in combinatorial terms by means of suitable collections of
convex polyhedral cones in the real vector space $N_\RR=N\otimes_\ZZ \RR$ obtained from the lattice $N$ of $1$-parameter
subgroups of $\mathbb{T}$.  Successive further generalizations
\cite{Dem88,FZ03,AH,AHS08} have led to complete descriptions of normal
$k$-varieties endowed with $\mathbb{T}$-actions in terms of certain
collections of so-called \emph{polyhedral divisors}, which are Weil
divisors $\DD$ on suitable rational quotients for the action, whose
coefficients are convex rational polyhedra in the vector space
$N_\RR$.

For normal algebraic $k$-varieties endowed with actions of non-split
tori, that is, algebraic groups $G$ defined over $k$ whose base
extensions to an algebraic closure $\overline{k}$ of $k$ are
isomorphic to split tori $\mathbb{G}_{m,\overline{k}}^n$ but which are
not isomorphic over $k$ to $\mathbb{G}_{m,k}^n$, much less is known
regarding the existence of geometrico-combinatorial descriptions
similar to the split case.  Toric varieties with respect to non-split
tori have been considered by several authors, see for instance
\cite{Vo82,EL14,Du16}. In another direction, the geometrico-combinatorial
presentation of Altmann-Hausen was partially extended by Langlois \cite{La15}
to yield a description of affine varieties $X$ endowed with 
an effective action of a non-split torus $G$ of dimension $\dim(X)-1$.
Nevertheless, the general case remains elusive. 
A natural and crucial step towards a geometrico-combinatorial description of such varieties 
would be to extend the Altmann-Hausen presentation in terms of polyhedral divisors
\cite{AH} to arbitrary normal affine $k$-varieties $X$ endowed with
effective actions of tori $G$, split or not. Since every torus $G$
splits after base change to a finite Galois extension $K/k$ of $k$,
this naturally leads to seek for such an extension in the form of a
geometrico-combinatorial description of affine $K$-varieties $X$ with
effective actions of split tori $\mathbb{T}$ which are compatible with
additional Galois descent data on $X$ and $\mathbb{T}$ for the finite
Galois cover $\mathrm{Spec}(K)\rightarrow \mathrm{Spec}(k)$.

In this article we lend support to this approach by considering a
simple case of independant geometric interest for which both the
combinatorics and the Galois descent machinery are reduced to their
minimum: normal real affine varieties with an effective action of the
unit circle \[\mathbb{S}^1=\spec(\mathbb{R}[x,y]/(x^2+y^2-1)),\] the
only non-split real form of $\mathbb{G}_{m,\mathbb{R}}$. In this
context, a descent datum on a normal complex affine variety $V$ for
the Galois cover
$\mathrm{Spec}(\mathbb{C})\rightarrow \mathrm{Spec}(\mathbb{R})$ boils
down to anti-regular involution $\sigma$ of $V$ called a \emph{real
  structure}. 

Our main results, Theorem~\ref{thm:real-AH-Main} and
Theorem~\ref{prop:Equiv-Iso}, give a description of $\mathbb{S}^1$-actions 
in the language of \cite{AH} extended to complex affine varieties with real structures. We chose to use the
Altmann-Hausen formalism since it is particularly suited for a
generalization to any non-necessarily split algebraic torus over a
field of characteristic zero which we will tackle in a near
future. Nevertheless, it is well-known that for a $1$-dimensional
split torus, polyhedral divisors in the sense of \cite{AH} correspond
equivalently to data consisting of certain pairs of Weil
$\mathbb{Q}$-divisors, a formalism first used by Dolgachev, Pinkham
and Demazure and then extended by Flenner-Zaidenberg to describe
$\GM$-actions on normal complex affine surfaces via $\QQ$-divisors on their quotients
(see the references in \cite{FZ03}). Corollary~\ref{cor:DPD-General}
provides a description of normal real affine varieties with
an effective $\mathbb{S}^1$-action in this equivalent language, which
can be summarized as follows:

\begin{thm*} A normal real affine variety $X$ endowed with an effective $\mathbb{S}^{1}$-action is uniquely determined by the following data: 
  \begin{enumerate}[(1)]
  \item A normal real semiprojective variety $Z$ corresponding to a
    normal complex semi-projective variety with real structure
    $(Y,\tau)$ representing the "real Altmann-Hausen quotient" of $X$
    by $\mathbb{S}^1$ (see Definition~\ref{def:AH-quotient})

  \item A pair $(D,h)$ consisting of a big and semiample
    $\mathbb{Q}$-Cartier divisor $D$ on $Y$ and $\tau$-invariant
    rational function $h$ on $Y$ satisfying $D+\tau^*D\leq \divi(h)$.

  \end{enumerate}
\end{thm*}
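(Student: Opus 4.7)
The strategy is to specialize the real Altmann-Hausen Theorem~\ref{thm:real-AH-Main} to the one-dimensional torus case and then translate the resulting polyhedral divisor into the classical Dolgachev-Pinkham-Demazure (DPD) pair of $\QQ$-divisors on the quotient. The entire argument rests on the observation that the Galois action on the cocharacter lattice $N = \ZZ$ of $\mathbb{S}^1$ is by $n \mapsto -n$, since $\mathbb{S}^1$ admits no non-trivial real characters.

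Starting from $X$, Theorem~\ref{thm:real-AH-Main} produces a triple $(Y,\tau,\mathcal{D})$ in which $(Y,\tau)$ is a normal complex semi-projective variety with real structure and $\mathcal{D}$ is a proper polyhedral divisor on $Y$ valued in polyhedra of $N_\RR = \RR$, subject to the Galois compatibility $\tau^*\mathcal{D} \sim -\mathcal{D}$. Since polyhedra in $\RR$ are closed intervals, I would encode $\mathcal{D}$ via the pair of $\QQ$-divisors
\[ D_+ := \mathcal{D}(1), \qquad D_- := \mathcal{D}(-1), \]
thereby recovering the classical DPD pair: properness of $\mathcal{D}$ forces $D_+$ and $D_-$ semi-ample, $D_+$ big, and $D_+ + D_- \leq 0$ (see \cite{FZ03}).

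Next I would translate the Galois compatibility into a relation between $D_+$ and $D_-$. Since $-1$ acts on $\RR$ by reflection, swapping the evaluations of the support function at $\pm 1$ for any bounded interval, the equivalence $\tau^*\mathcal{D} \sim -\mathcal{D}$ gives the pair-equivalence $(\tau^*D_+, \tau^*D_-) \sim (D_-, D_+)$. Because the DPD pair is defined only up to the natural equivalence corresponding, under Altmann-Hausen, to adding a principal polyhedral divisor, this rewrites as the existence of $h \in \CC(Y)^\times$ with $\tau^*D_+ = D_- + \divi(h)$. Setting $D := D_+$, the inequality $D_+ + D_- \leq 0$ becomes exactly the stated $D + \tau^*D \leq \divi(h)$.

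The main technical point, which I expect to be the principal obstacle, is to arrange $\tau$-invariance of $h$. The two relations $\tau^*D_+ = D_- + \divi(h)$ and $\tau^*D_- = D_+ - \divi(h)$ obtained from Galois compatibility imply, after applying $\tau^*$ once more, that $\tau^*h/h$ is a constant $c \in \CC^\times$; applying $\tau^*$ to this constant forces $|c| = 1$. Writing $c = e^{i\theta}$ and replacing $h$ by $e^{i\theta/2} h$ then yields a $\tau$-invariant representative, while rescaling by a constant leaves $\divi(h)$ unchanged. Uniqueness of the quadruple $(Y,\tau,D,h)$ up to the natural equivalence follows from Theorem~\ref{prop:Equiv-Iso}, and the converse implication is obtained by reversing the construction: from any $(Y,\tau,D,h)$ as in the statement, one sets $(D_+,D_-) := (D,\tau^*D - \divi(h))$, verifies Galois compatibility of the resulting $\mathcal{D}$ using the $\tau$-invariance of $h$, and invokes Theorem~\ref{thm:real-AH-Main} once more to recover $X$.
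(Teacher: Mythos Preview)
Your overall strategy matches the paper's: invoke Theorem~\ref{thm:real-AH-Main} to obtain a proper segmental divisor on the real AH-quotient $(Y,\tau)$, then translate it into the pair $(D_+,D_-)=(\mathcal{D}(1),\mathcal{D}(-1))$ with $D_++D_-\leq 0$, and observe that the Galois compatibility reads $D_-=\tau^*D_+-\divi(h)$, whence $D+\tau^*D\leq\divi(h)$ with $D:=D_+$. This is exactly the content of the paper's Section~\ref{rem:seg-div-couple}.

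However, you misread what Theorem~\ref{thm:real-AH-Main} actually delivers. Its part (3) does not merely produce a $\mathcal{D}$ satisfying $\tau^*\mathcal{D}\sim\flip{\mathcal{D}}$; it produces a \emph{phs-pair} $(\mathcal{D},h)$ in which $h$ is $\tau$-invariant by definition (it is constructed there as $h=s\,\sigma^*(s)$ for a semi-invariant $s$ of weight~$1$). So the paragraph you flag as ``the principal obstacle'' is simply unnecessary: the $\tau$-invariance of $h$ is already part of the input, and the paper's proof of the present statement is nothing more than the formal rewriting $\mathcal{D}=\{1\}\otimes D_+ + [0,1]\otimes(-D_+-D_-)$.

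Worse, your substitute argument for $\tau$-invariance has a genuine gap. From $\divi(\tau^*h)=\divi(h)$ you conclude that $\tau^*h/h$ is a constant in $\mathbb{C}^\times$, but in fact it is only a global unit in $\Gamma(Y,\mathcal{O}_Y^*)$. Since $Y$ is merely semiprojective, it may carry non-constant invertible regular functions, and your ``replace $h$ by $e^{i\theta/2}h$'' trick does not apply. What you are really trying to show is that the cocycle $u=\tau^*h/h$ is a coboundary in $H^1(\mathrm{Gal}(\mathbb{C}/\mathbb{R}),\Gamma(Y,\mathcal{O}_Y^*))$, and this group need not vanish (for instance, take $Y=\mathbb{G}_{m,\mathbb{C}}$ with the real structure $z\mapsto\overline{z}^{-1}$). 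The paper avoids this obstruction entirely by building $h$ as a norm from the outset rather than extracting it a posteriori from a linear equivalence.
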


The contents of the article is as follows. In Section~\ref{section1}
we recall the classical equivalence of categories between
quasi-projective real varieties and quasi-projective complex varieties
equipped with a real structure. We establish in
Lemma~\ref{lem:S1-action-equiv-form} the corresponding representation 
of quasi-projective real varieties with circle actions under this equivalence of categories. 
In section~\ref{section2} we establish the main classification results
extending the description by Altmann and Hausen for split torus actions to the case of circle actions on normal real affine varieties. 
Finally, in Sections~\ref{section3} and \ref{section4} we present several instances 
of applications of our techniques to examples taken 
from algebraic and differential geometry.

\section{Basic facts on real algebraic varieties and circle
  actions} \label{section1}

In what follows, we identify the field $\mathbb{R}$ of real numbers
with a subfield of $\mathbb{C}$ via the standard inclusion
$j^*:\mathbb{R}\hookrightarrow\mathbb{C}=\mathbb{R}[i]/(i^{2}+1)$ so
that the usual complex conjugation $J:\mathbb{C}\rightarrow \mathbb{C}$, $z\mapsto\overline{z}$ coincides
with the homomorphism of $\mathbb{R}$-algebra defined by $i\mapsto-i$.

The term $\KK$-variety, where $\KK=\mathbb{R}$ or
$\mathbb{C}$, will refer to a geometrically integral scheme $X$ of
finite type over $\KK$. A morphism of $\KK$-varieties is
a morphism of $\KK$-schemes. 

\subsection{\label{subsec:Real-structures} Real quasi-projective
  varieties as complex varieties with real structures}

Let us briefly recall the classical correspondence \cite{BS64} between
quasi-projective real algebraic varieties and quasi-projective complex
algebraic varieties equipped with a real structure.

Every complex algebraic variety $p:V\rightarrow\mathrm{Spec}(\mathbb{C})$ can be viewed as an
$\mathbb{R}$-scheme $j\circ p:V\rightarrow\mathrm{Spec}(\mathbb{R})$,
and a \emph{real structure} on such a variety $V$ is an involution
$\sigma:V\rightarrow V$ of $\mathbb{R}$-schemes such that
$p\circ\sigma=J\circ p$, where $J$ denotes the complex conjugation. 

Every complex variety $X_{\mathbb{C}}=X\times_{\mathrm{Spec}(\mathbb{R})}\mathrm{\mathrm{Spec}(\mathbb{C})}$
obtained from a real algebraic variety $X$ by the base change $\mathrm{Spec}(\mathbb{C})\rightarrow \mathrm{Spec}(\mathbb{R})$
is canonically endowed with a real structure $\mathrm{\sigma_{X}}=\mathrm{id}_{X}\times J$ for which the morphism
$\mathrm{pr}_{1}:X_{\mathbb{C}}\rightarrow X$ coincides with the
quotient $X_{\mathbb{C}}/\langle\sigma_{X}\rangle$.  Conversely, if
$p:V\rightarrow\mathrm{Spec}(\mathbb{C})$ is equipped with a real
structure $\sigma$ and covered by $\sigma$-invariant affine open
subsets -so for instance if $V$ is quasi-projective-, then the
quotient $\pi:V\rightarrow V/\langle\sigma\rangle$ exists in the
category of schemes and the structure morphism
$p:V\rightarrow\mathrm{Spec}(\mathbb{C})$ descends to a morphism
$V/\langle\sigma\rangle\rightarrow\mathrm{Spec}(\mathbb{R})=\mathrm{Spec}(\mathbb{C})/\langle\tau\rangle$
making $V/\langle\sigma\rangle$ into a real algebraic variety $X$ such
that $V\simeq X_{\mathbb{C}}$.  This correspondence extends to a
well-known equivalence of categories which can be summarized as
follows:
\begin{lem}
\label{lem:Real-structures} The category of quasi-projective real
algebraic varieties is equivalent to the category $\mathcal{C}$ whose
objects are pairs $(V,\sigma)$ consisting of a quasi-projective complex
algebraic variety $V$ and a real structure $\sigma:V\rightarrow V$
and whose morphisms $(V,\sigma)\rightarrow(V',\sigma')$ are morphisms
of complex algebraic varieties $f:V\rightarrow V'$ such that $\sigma'\circ f=f\circ\sigma$. 
\end{lem}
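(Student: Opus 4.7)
I would construct the two functors explicitly and verify they are mutually quasi-inverse. In one direction, the functor $F$ sends a quasi-projective real variety $X$ to the pair $(X_\CC,\sigma_X)$ with $\sigma_X=\mathrm{id}_X\times J$ as described in the excerpt, and sends an $\RR$-morphism $f\colon X\to X'$ to $f\times\mathrm{id}_{\spec(\CC)}$, which tautologically commutes with the real structures. In the opposite direction, the functor $G$ sends $(V,\sigma)$ to the geometric quotient $V/\langle\sigma\rangle$, and sends a morphism $g\colon (V,\sigma)\to (V',\sigma')$ to the morphism between quotients induced by $g$ (which exists because $g$ is $\sigma$-equivariant).

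The key geometric point is the existence of the quotient $V/\langle\sigma\rangle$ as a scheme when $V$ is quasi-projective. I would use the classical fact that in a quasi-projective scheme every finite set of points is contained in an affine open, so for every $x\in V$ there is an affine open $U$ containing $\{x,\sigma(x)\}$; separatedness of $V$ then makes $U\cap\sigma(U)$ affine and $\sigma$-invariant. Hence $V$ admits an affine open cover by $\sigma$-invariant pieces $\spec(B_\alpha)$, and on each of these the affine quotient $\spec(B_\alpha^{\sigma})$ exists and is a $\spec(\RR)$-scheme. These affine quotients glue along invariant intersections to produce a quotient scheme $X=V/\langle\sigma\rangle$ with a finite morphism $\pi\colon V\to X$. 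The structure morphism $p\colon V\to\spec(\CC)$ is $\sigma$-equivariant for $J$, so it descends to a morphism $X\to\spec(\RR)$, and $X$ is again quasi-projective since a $\sigma$-invariant ample line bundle on $V$ (obtained from any ample $L$ by tensoring $L$ with $\sigma^*L$ and passing to an invariant power) descends to an ample line bundle on $X$.

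The verification that $F$ and $G$ are mutually quasi-inverse then reduces, via the invariant affine cover, to the standard Galois descent statement for the degree-two extension $\CC/\RR$. For an affine real variety $X=\spec(A)$ one has $(A\otimes_\RR\CC)^{\mathrm{id}\otimes J}=A$ canonically, giving $G\circ F(X)\simeq X$; for an affine pair $(V,\sigma)=(\spec(B),\sigma)$, faithfully flat descent provides a canonical $\sigma$-equivariant isomorphism $B^\sigma\otimes_\RR\CC\simeq B$, giving $F\circ G(V,\sigma)\simeq (V,\sigma)$. These affine identifications glue, and the bijection on Hom-sets follows by applying the same descent to morphisms of invariant rings. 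The main obstacle in the whole argument is the existence of the quotient in the non-affine quasi-projective setting; once this is established the rest is a routine unpacking of Galois descent for $\CC/\RR$, which is why the statement is classical and attributed to \cite{BS64}.
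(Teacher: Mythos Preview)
The paper does not actually supply a proof of this lemma: it is stated as a classical fact, with the two directions of the correspondence described informally in the paragraph preceding the statement and a reference to \cite{BS64}. Your proposal correctly fleshes out precisely that sketch---base change in one direction, quotient by the real structure in the other, existence of the quotient via a $\sigma$-invariant affine cover (which the paper also singles out as the key point in the quasi-projective case), and Galois descent for $\CC/\RR$ to check the two functors are quasi-inverse---so your approach is the standard one and agrees with what the paper outlines.
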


In particular, two real structures $\sigma$ and $\sigma'$ on the same
quasi-projective complex variety $V$ define isomorphic real algebraic
varieties $V/\langle\sigma\rangle$ and $V/\langle\sigma'\rangle$ if
and only if there exists an isomorphism of complex algebraic varieties
$f:V\rightarrow V$ such that $\sigma'\circ f=f\circ\sigma$.

\medskip 

In the sequel, we will indistinctly represent a quasi-projective real
variety $X$ by a pair $(V,\sigma)$ where $V$ is a quasi-projective complex variety and
$\sigma$ is a real structure on $V$ such that $V/\langle\sigma\rangle$
is isomorphic to $X$.  Similarly, we will represent a morphism
(resp. a rational map) $f:X\rightarrow X'$ between real varieties
represented by pairs $(V,\sigma)$ and $(V',\sigma')$ respectively by a
morphism (resp. a rational map) $\tilde{f}:V\rightarrow V'$ such that
$\sigma'\circ \tilde{f}=\tilde{f}\circ \sigma$. We sometimes abbreviate this
condition by saying that $\tilde{f}$ is a \emph{real morphism} (resp
\emph{real rational map}).

\begin{defn}
  A \emph{real form} of a real algebraic variety $X=(V,\sigma)$ is a
  real algebraic variety $X'=(V',\sigma')$ such that $V$ and $V'$ are
  isomorphic as complex varieties. Isomorphy classes of real forms of
  $X$ are classified by the Galois cohomology group
  $H^{1}(\operatorname{Gal}(\CC/\RR),\Aut_\CC(V))$ where the
  non-trivial element of $\operatorname{Gal}(\CC/\RR)=\mu_2$ acts on
  $\Aut_{\CC}(V)$ by conjugation $f\mapsto\sigma f\sigma^{-1}$.
\end{defn}

Recall that an algebraic variety $V$ is said to be
\emph{semi-projective} if its coordinate ring
$\Gamma(V,\mathcal{O}_V)$ is finitely generated and the canonical
morphism $V\rightarrow \mathrm{Spec}(\Gamma(V,\mathcal{O}_V))$ is
projective. When $X$ is a real algebraic variety represented by a pair
$(V,\sigma)$, we denote by $\Gamma(\sigma)$ the unique real structure
on $\mathrm{Spec}(\Gamma(V,\mathcal{O}_V))$ for which the canonical
morphism
$(V,\sigma)\rightarrow
(\mathrm{Spec}(\Gamma(V,\mathcal{O}_V),\Gamma(\sigma))$ is a real
morphism.

\subsection{Circle actions on quasi-projective real
  varieties} \label{subsec:Circle-actions-Defs}
\begin{defn}
  \label{def:Circle}The \emph{real circle} $\mathbb{S}^{1}$ is the
  only non-trivial real form
\[
  R_{\mathbb{C}/\mathbb{R}}^{1}\mathbb{G}_{m,\mathbb{C}}=\mathrm{Spec}(\mathbb{R}[x,y]/(x^{2}+y^{2}-1))
\]
of the multiplicative group $\mathbb{G}_{m,\mathbb{R}}$.  The group
structure on $\mathbb{S}^{1}$ is given by
$$\left(x,y\right)\cdot(x',y')=(xx'-yy',xy'+yx'),$$ and the morphism of
group schemes
\begin{equation}
\begin{array}{ccc}
  \rho_{0}:\mathbb{S}^{1} & \rightarrow & \mathrm{SL}_{2,\mathbb{R}}=\mathrm{Spec}(\mathbb{R}[a_{11},a_{12},a_{21},a_{22}]/(a_{11}a_{22}-a_{12}a_{21}-1))\\
  (x,y) & \mapsto & \left(x,y,-y,x\right)
\end{array}\label{eq:Representation}
\end{equation}
induces an isomorphism between $\mathbb{S}^{1}$ and the closed
subgroup $\mathrm{SO}_{2,\mathbb{R}}$ defined by the equation
$a_{11}-a_{22}=a_{12}+a_{21}=0$.
\end{defn}

It is straighforward to check that the map
\begin{equation} \label{eqn:GroupIso}
  \varphi:\GM=\mathrm{Spec}(\mathbb{C}[t^{\pm1}])\rightarrow
  \mathbb{S}_{\mathbb{C}}^{1},\;t\mapsto(x,y)=((t+t^{-1})/2,(t-t^{-1})/2i)
\end{equation} 
is an isomorphism of complex group schemes. The pull-back of the canonical real
structure $\sigma_{\mathbb{S}^1}$ on $\mathbb{S}_{\mathbb{C}}^{1}$ by
$\varphi$ is the real structure $\rho$ on $\GM$ defined as the
composition of the involution $t\mapsto t^{-1}$, induced by the
involution $-\mathrm{id}_M : m\mapsto -m$ of the character lattice
$M\simeq \ZZ$ of $\GM$, with the complex conjugation. We henceforth
identify the group object $\mathbb{S}^1$ in the category of real
algebraic varieties with the pair $(\GM,\rho)$.

\begin{lem}
  \label{lem:S1-action-equiv-form} 
  There is a one-to-one correspondence between quasi-projective real
  algebraic varieties endowed with an effective
  $\mathbb{S}^{1}$-action and triples $(V,\sigma,\mu)$ consisting of a
  quasi-projective real algebraic variety $X=(V,\sigma)$ and an effective
  $\GM$-action $\mu:\GM\times V\rightarrow V$ such that the following
  diagram commutes:
  \begin{align}
  \label{eq:diag-Gm} 
    \xymatrix{ \GM \times V \ar[rr]^{\mu} \ar[d]_{\rho\times\sigma} & &
    V \ar[d]^{\sigma} \\
    \GM \times V
    \ar[rr]^{\mu} & &  V.}    
  \end{align}
\end{lem}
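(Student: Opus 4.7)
The plan is to deduce the correspondence directly from the equivalence of categories in Lemma~\ref{lem:Real-structures}, combined with the identification $\mathbb{S}^{1}=(\GM,\rho)$ fixed just before the statement. First I would observe that specifying an effective $\mathbb{S}^{1}$-action on a quasi-projective real variety $X=(V,\sigma)$ amounts to specifying a morphism of real varieties $\alpha:\mathbb{S}^{1}\times X\rightarrow X$ satisfying the action axioms and such that the induced group homomorphism $\mathbb{S}^{1}\rightarrow\Aut(X)$ has trivial scheme-theoretic kernel.

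The next step is to compute the image of $\mathbb{S}^{1}\times X$ under the equivalence of Lemma~\ref{lem:Real-structures}. Since the equivalence is compatible with fiber products (the base change of a product being the product of the base changes, and the conjugation of a product being the product of the conjugations), the real variety $\mathbb{S}^{1}\times X$ is represented by the pair $(\GM\times V,\rho\times\sigma)$. Consequently, a morphism $\alpha:\mathbb{S}^{1}\times X\rightarrow X$ corresponds under the equivalence to a morphism of complex varieties $\mu:\GM\times V\rightarrow V$ satisfying precisely $\sigma\circ\mu=\mu\circ(\rho\times\sigma)$, which is the commutativity of the diagram~\eqref{eq:diag-Gm}. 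The group-action axioms for $\alpha$ (neutral element and associativity) translate termwise into the corresponding axioms for $\mu$, because the group structure morphisms on $\mathbb{S}^{1}$ are, via $\varphi$, carried by the equivalence to the group structure morphisms on $\GM$, and the equivalence respects composition of morphisms.

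It remains to match effectiveness on both sides. A closed subgroup scheme $H\subset\mathbb{S}^{1}$ is trivial if and only if its base change $H_{\CC}\subset\GM$ is trivial, since $\spec(\CC)\rightarrow\spec(\RR)$ is faithfully flat. Applying this to the scheme-theoretic kernel of $\alpha$, whose complexification is the scheme-theoretic kernel of $\mu$, shows that $\alpha$ is effective exactly when $\mu$ is. I do not anticipate any real obstacle here: the entire statement is a direct application of the equivalence of categories, with the only verifications being the compatibility of complexification with products and the preservation of effectiveness, both standard.
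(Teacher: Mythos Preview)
Your proposal is correct and follows essentially the same approach as the paper: both arguments use the equivalence of categories from Lemma~\ref{lem:Real-structures} together with the identification $\mathbb{S}^{1}=(\GM,\rho)$ to translate an $\mathbb{S}^{1}$-action on $X=(V,\sigma)$ into a $\GM$-action $\mu$ on $V$ satisfying the compatibility diagram. The paper's proof is simply terser, invoking the definition and the isomorphism $\varphi$ in one line, whereas you spell out the preservation of products, the action axioms, and effectiveness under base change.
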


\begin{proof}
  An $\mathbb{S}^1$-action on $X=(V,\sigma)$ corresponds by definition
  to an effective $\mathbb{S}^1_{\mathbb{C}}$-action $\eta$ on $V$
  such that
  $\sigma\circ\eta=\eta\circ(\sigma_{\mathbb{S}^1}\times \sigma)$,
  hence, composing with the real isomorphism $\varphi$ of
  (\ref{eqn:GroupIso}), to an effective $\GM$-action
  $\mu=\eta\circ(\varphi\times \mathrm{id}_V)$ with the announced
  property.
\end{proof}

\begin{conv}
  In the rest of the article, we will indistinctly represent a
  quasi-projective real variety $X$ endowed with an effective
  $\mathbb{S}^1$-action by one of the following data:
  \begin{enumerate}[(1)]
  \item a quasi-projective real algebraic variety $X$ and a morphism of real algebraic varieties 
    $\mathbb{S}^1\times X\rightarrow X$ defining an effective $\mathbb{S}^1$-action; or

  \item a triple $(V,\sigma,\mu)$ consisting of a quasi-projective
    complex algebraic variety $V$ endowed with a real structure $\sigma$ and a
    morphism of complex algebraic varieties $\mu:\GM \times V\rightarrow V$
    defining an effective $\GM$-action on $V$ such that
    $\mu\circ(\rho\times \sigma)=\sigma\circ\mu$.
  \end{enumerate}
\end{conv}

\begin{defn} A real form of a quasi-projective real
  $\mathbb{S}^1$-variety $X=(V,\sigma,\mu)$ is a quasi-projective real
  $\mathbb{S}^1$-variety $X'=(V',\sigma',\mu')$ such that $V$ is
  $\GM$-equivariantly isomorphic to $V'$.
\end{defn}

\subsection{The case of real affine varieties}
\label{affine-case}
Specializing further to the case where $X=(V,\sigma)$ is a real affine
algebraic variety, say $V=\spec(A)$ for some finitely generated
integral $\mathbb{C}$-algebra $A$, the $\GM$-action $\mu$ on $V$ is
equivalently determined by its co-morphism
$\mu^{*}:A\rightarrow A\otimes_{\mathbb{C}}\mathbb{C}[t^{\pm1}]$.
Recall that a semi-invariant regular function of weight
$m\in\mathbb{Z}$ on $V$ for the action $\mu$ is an element $f\in A$
such that $\mu^{*}f=f\otimes t^{m}$, and that $A$ is then
$\mathbb{Z}$-graded in a natural way by its sub-spaces $A_{m}$ of
semi-invariants of weight $m$ for all $m\in\mathbb{Z}$. The action
$\mu$ is said to be \emph{hyperbolic} if there exists $m<0$ and $m'>0$
such that $A_{m}$ and $A_{m'}$ are non-zero.

\begin{lem} \label{lem:Charac-S1-actions} Let
  $X=(V=\spec(A),\sigma,\mu)$ be a real affine algebraic variety
  endowed with an effective $\mathbb{S}^{1}$-action and let
  $A=\bigoplus_{m\in\mathbb{Z}}A_{m}$ be the decomposition of $A$ into
  semi-invariants sub-spaces for the $\GM$-action $\mu$. Then the
  following hold:
\begin{enumerate}[$(i)$]
\item The action $\mu$ is hyperbolic and $A_m\neq 0$ for all $m\in\mathbb{Z}$
\item For all $m\in\mathbb{Z}$, $\sigma^{*}(A_{m})=A_{-m}$,
\item The restriction of $\sigma^{*}$ to $A_{0}=A^{\GM}$ is the
  co-morphism of a real structure $\overline{\sigma}$ on the algebraic
  quotient $V/\!/\GM=\mathrm{Spec}(A_{0})$ of $V$.
\end{enumerate}
\end{lem}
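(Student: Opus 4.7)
The plan is to transcribe the compatibility condition of diagram~(\ref{eq:diag-Gm}) into an identity between the co-morphism $\sigma^*: A \to A$ (an $\RR$-algebra involution that restricts to complex conjugation on $\CC$) and the co-action $\mu^*: A \to A \otimes_\CC \CC[t^{\pm 1}]$, and then to read off each of (i)--(iii) from that identity. The crucial input is a careful description of $\rho^*$: by the definition of $\rho$ as the composition of the involution $t \mapsto t^{-1}$ with the complex conjugation, one has $\rho^*(\lambda t^m) = \overline{\lambda}\, t^{-m}$ for every $\lambda \in \CC$ and every $m \in \ZZ$. Dualizing $\mu \circ (\rho \times \sigma) = \sigma \circ \mu$ then yields the $\RR$-algebra identity $(\sigma^* \otimes \rho^*) \circ \mu^* = \mu^* \circ \sigma^*$ between maps $A \to A \otimes_\CC \CC[t^{\pm 1}]$.

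For (ii), I apply both sides of this identity to an element $f \in A_m$: the left-hand side evaluates to $\sigma^*(f) \otimes t^{-m}$, while the right-hand side equals $\mu^*(\sigma^* f)$. This forces $\sigma^*(f)$ to be a semi-invariant of weight $-m$, so $\sigma^*(A_m) \subseteq A_{-m}$; the reverse inclusion follows by applying the same argument to $A_{-m}$ and using that $\sigma^*$ is an involution. Statement (iii) is then immediate by specializing to $m = 0$: $\sigma^*$ restricts to an involutive $\RR$-algebra automorphism $\overline{\sigma}^*$ of $A_0$, and because $\sigma^*|_\CC$ is the complex conjugation so is $\overline{\sigma}^*|_\CC$, which identifies $\overline{\sigma}^*$ as the co-morphism of a real structure on $\spec(A_0) = V/\!/\GM$.

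For (i), let $S = \{m \in \ZZ : A_m \neq 0\}$. Since $A$ is an integral domain, $S$ is closed under addition; it contains $0$ and, by (ii), it satisfies $S = -S$, so $S$ is a subgroup $d\ZZ$ of $\ZZ$ for some integer $d \geq 0$. The kernel of the $\GM$-action on $V$ is the group subscheme of $\GM$ cut out by the characters $t \mapsto t^m$ with $m \in S$, which is the cyclic group of order $d$; effectiveness forces $d = 1$, so $A_m \neq 0$ for every $m \in \ZZ$ and in particular for some $m < 0$ and some $m' > 0$, establishing hyperbolicity. The main obstacle is this last step, where one must correctly translate effectiveness into primitivity of the weight set $S$; everything else is a direct manipulation of the identity governing $\sigma^*$ and $\mu^*$.
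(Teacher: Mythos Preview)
Your proof is correct and follows the same line as the paper's: both establish (ii) by applying the dualized compatibility identity $\mu^*\sigma^* = (\sigma^*\otimes\rho^*)\mu^*$ to a homogeneous element, deduce (iii) as the degree-zero case, and obtain (i) from (ii) together with effectiveness. Your treatment of (i) packages the weight set directly as a subgroup $d\ZZ$ and reads off $d=1$ from the kernel, whereas the paper first notes hyperbolicity via (ii) and then invokes a B\'ezout argument to realize every integer as a weight; the underlying content is the same.
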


\begin{proof}
The  commutativity of the diagram \eqref{eq:diag-Gm} implies that for every semi-invariant $f$ of weight $m\in \mathbb{Z}$, we have
 $$\mu^{*}\sigma^{*}(f)=(\sigma^{*}\otimes\rho^{*})(f\otimes  t^{m})=\sigma^{*}(f)\otimes t^{-m},$$ hence that $\sigma^{*}(f)$ is a
 semi-invariant of weight $-m$. The equality
 $\sigma^{*}(A_{m})=A_{-m}$ follows from the fact that $\sigma^{*}$ is
 an automorphism of $A$, which proves the second assertion. Since $\mu$ is non trivial, there exists a semi-invariant function $f$ of non-zero
weight $m$, and hence a semi-invariant function $\sigma^*(f)$ of non-zero weight $-m$. This shows that $\mu$ is hyperbolic, and the second part of the first assertion is a standard fact for such actions. Indeed, since the action is
 effective, the set $\left\{m\in \ZZ\mid A_m\neq \{0\}\right\}$ is not
 contained in any proper sublattice $d\cdot\ZZ$, $d>1$. Hence, there
 exists $e<0$ and $e'>0$ relatively prime such that $A_{e}$ and
 $A_{e'}$ are non-zero. Let $f$ and $g$ be non-zero elements in $A_e$
 and $A_{e'}$, respectively.  Now, for every integer $m\in \ZZ$, there
 exist integers $a<0$ and $b>0$ such that $ae-be'=m$. Then
 $f^ag^b\in A_m$ is a non-zero element, as desired.  

 The last assertion is straightforward.
\end{proof}

With the notation above, it follows from Lemma
\ref{lem:Real-structures} (iii) that
$X/\!/\mathbb{S}^{1}=(V/\!/\GM,\overline{\sigma})$ is a real affine
algebraic variety and that the morphism
$\pi:X=(V,\sigma) \rightarrow
X/\!/\mathbb{S}^{1}=(V/\!/\GM,\overline{\sigma})$ induced by the
inclusion $A_{0}\hookrightarrow A$ is an $\mathbb{S}^{1}$-invariant
morphism of real algebraic varieties, which is a categorical quotient
in the category of real affine algebraic varieties.

\section{Altmann-Hausen presentation of a circle action}
\label{section2}

The aim of this section is to establish a counter-part for real affine
varieties with circle actions of the geometrico-combinatorial
presentation of affine varieties with split tori actions developped
by Altmann and Hausen in \cite{AH}.

We first need to introduce a special kind of rational quotient for a
real affine algebraic variety endowed with an effective
$\mathbb{S}^{1}$ which is the real counterpart of the quotient
constructed in \cite{AH} for normal affine varieties with split tori
actions. Keeping the same notation as in $\S$ \ref{affine-case} above, we let $d>0$ be minimal
such that $\bigoplus_{m\in \mathbb{Z}}A_{dm}$ is generated by
$A_{\pm d}$ as a graded $A_{0}$-algebra.  By virtue of Lemma
\ref{lem:Charac-S1-actions}~$(ii)$ and $(iii)$, the closed sub-scheme $W$ of
$Y_0(V)=V/\!/\GM=\mathrm{Spec}(A_{0})$ with defining ideal
$I=\langle A_{d}\cdot A_{-d}\rangle\subset A_{0}$ is
$\overline{\sigma}$-invariant.

\begin{defn} \label{def:AH-quotient} The \emph{real AH-quotient} of
  $X=(V,\sigma,\mu)$ by the $\mathbb{S}^{1}$-action $\mu$ is the real
  quasi-projective variety formed by total space of the blow-up
  $\pi:Y(V)\rightarrow Y_0(V)$ of $Y_0(V)$ with center at $W$, endowed
  with the lift $\tau$ of the real structure $\overline{\sigma}$.
\end{defn}
 
The complex variety $Y(V)$ is semi-projective, and by virtue of
\cite[Theorem 1.9]{Tha96} (see also \cite{Pe15}), it is isomorphic to
the irreducible component of the fiber product
\[\mathrm{Proj}(\bigoplus_{m\leq
    0}A_{m})\times_{Y_0(V)}\mathrm{Proj}(\bigoplus_{m\geq 0}A_{m})\]
which dominates $Y_0(V)$. So $Y(V)$ coincides with the AH-quotient of
$V$ for the $\GM$-action $\mu$ in the sense of \cite{AH}.  By
construction, $\pi: (Y(V),\tau)\rightarrow (Y_0(V),\overline{\sigma})$
is a morphism of real algebraic varieties.

\subsection{Proper hyperbolic segmental divisors}
\label{Phs-div}
Recall that a Weil $\QQ$-divisor $D$ on a normal algebraic variety $Y$
is called $\QQ$-Cartier if $nD$ is Cartier for some $n\geq 1$. Furthermore, $D$ is 
semiample if there exits $n\geq 1$ such that the linear system $|nD|$ is base point free, 
equivalently such that the sheaf $\OO_Y(nD)$ is invertible and globally generated. The divisor $D$ is called big if there exists $E\in |nD|$ with affine complement for some $n\geq 1$. In the sequel, all our divisors will be
$\QQ$-Cartier Weil $\QQ$-divisors. We will refer to such divisors simply as $\QQ$-Cartier divisors.

\begin{notation}
  Given a $\mathbb{Q}$-Cartier divisor $D$ on a normal variety $Y$, we
  denote the round-down of $D$ by $\lfloor D\rfloor$. We identify the
  $\Gamma(Y,\mathcal{O}_{Y})$-module   $\Gamma(Y,\mathcal{O}_{Y}(\lfloor D\rfloor))$ with the
  sub-$\Gamma(Y,\mathcal{O}_{Y})$-module of the field of rational functions $\mathrm{Frac}(Y)$ of $Y$ 
  generated by rational functions $g\in\mathrm{Frac}(Y)$ such that
  $\mathrm{div}(g)+\lfloor D\rfloor\geq 0$. Under this identification, a rational function 
  $g\in\mathrm{Frac}(Y)$ satisfies $\mathrm{div}(g)+\lfloor D\rfloor\geq 0$ if and only if
  $\mathrm{div}(g)+ D\geq 0$. So we can set without ambiguity
  $\Gamma(Y,\mathcal{O}_{Y}(\lfloor D\rfloor)):=\Gamma(Y,\mathcal{O}_{Y}(D))$ and remove the round-down
  brackets from the notation.
\end{notation}

We now review a simple special case adapted to our context of the
general notion of polyhedral divisor defined in \cite{AH}.  Let
$N\simeq \mathbb{Z}$ be a rank one lattice and let $M$ be its dual.
Let $\mathcal{J}$ be the set of all closed intervals $[a,b]$ of
$N\otimes_{\mathbb{Z}}\mathbb{R}\simeq \mathbb{R}$ with rational
bounds, where we admit singleton intervals with $a=b$ as
$[a,a]=\{a\}$. The set $\mathcal{J}$ has the structure of an abelian
semi-group for the Minkowski sum $[a,b]+[a',b']=[a+a',b+b']$, with
identity $[0,0]=\{0\}$. Every element $m\in M$ determines a semi-group
homomorphism
\[\mathrm{ev}_{m}:\mathcal{J}\rightarrow\mathbb{Q},\quad
[a,b]\mapsto\min(ma,mb)=
\begin{cases}
  ma & \mbox{if } m\geq 0 \\
  mb & \mbox{if } m<0
\end{cases}
\,.\]

\begin{defn} 
  A \emph{segmental divisor} \cite{Pe15} on a normal algebraic variety
  $Y$ is an element
  \[\mathcal{D}=\sum[a_{i},b_{i}]\otimes D_{i} \in
    \mathcal{J}\otimes_{\mathbb{Z}}\mathrm{WDiv}(Y)\] of the
  semi-group of formal finite sums with coefficients in $\mathcal{J}$
  of prime Weil divisors on $Y$.  A segmental divisor
  $\mathcal{D}=\sum[a_{i},b_{i}]\otimes D_{i}$ is called \emph{proper}
  if for every $m\in\mathbb{Z}$, the Weil $\mathbb{Q}$-divisor
\[\mathcal{D}(m):=(\mathrm{ev}_{m}\otimes\mathrm{id})(\mathcal{D})=\sum\min(ma_i,mb_i)D_{i}\]
is a big, semiample $\mathbb{Q}$-Cartier divisor on $Y$.   
\end{defn}

Every Weil divisor $D$ on $Y$ determines a segmental divisor
$\{1\}\otimes\ D$, in particular every non-zero rational function $f$
on $Y$ determines a \emph{principal segmental divisor}
$\{1\}\otimes\mathrm{div}(f)$.  Note in addition that the definition
of the evaluation homomorphisms $\mathrm{ev}_{m}$ guarantees that for
every segmental divisor $\mathcal{D}$ and every pair of integers
$m,n\in \mathbb{Z}$ the Weil $\QQ$-divisors
$\mathcal{D}(m)+\mathcal{D}(n)-\mathcal{D}(m+n)$ are all
anti-effective. In particular,
$\mathcal{D}(m)+ \mathcal{D}(-m)\leq \mathcal{D}(0)=0$ for all
$m\in \mathbb{Z}$.
\begin{defn} %
  Given a dominant rational map $\psi:Y'\dashrightarrow Y$ between
  normal algebraic varieties and a segmental divisor
  $\mathcal{D}=\sum[a_{i},b_{i}]\otimes D_{i}$ on $Y$, the
  \emph{pull-back of $\mathcal{D}$ by $\psi$} is the segmental divisor
  $\psi^*\mathcal{D}:=\sum[a_{i},b_{i}]\otimes \psi^*D_{i}$ on $Y'$
  where for every $i$, $\psi^*D_{i}$ is the usual pull-back on $Y'$ of
  the Weil divisor $D_i$ on $Y$ by $\psi$.
\end{defn} 

Recall that the real structure $\rho$ on $\GM$ is given as the
composition of the automorphism induced by the involution
$-\mathrm{id}_M$ of its character lattice $M\simeq \ZZ$ with the
complex conjugation. The dual involution
$-\mathrm{id}_N=(-\mathrm{id}_M)^*$ of the lattice $N\simeq\ZZ$ of
$1$-parameter subgroups of $\GM$ induces an involution of
$\mathcal{J}$. Moreover, when $Z$ is a normal real algebraic variety
represented by a complex variety $Y$ with real structure $\tau$, the
pull-back of Weil $\QQ$-divisors on $Y$ by the real structure $\tau$
induces an involution $\tau^*$ on $\QQ$-divisors on $Y$. Putting these
two involutions together, we obtain an involution
\[(-\mathrm{id}_M)^*\otimes \tau^*:
  \mathcal{J}\otimes_{\mathbb{Z}}\mathrm{WDiv}(Y)
  \rightarrow\mathcal{J}\otimes_{\mathbb{Z}}\mathrm{WDiv}(Y), \quad
  \mathcal{D}=\sum[a_{i},b_{i}]\otimes D_{i}\mapsto
  \sum[-b_i,-a_{i}]\otimes\tau^{*}D_{i}.\]
 
\begin{defn} 
  A \emph{proper hyperbolic segmental pair (phs-pair)} on
  normal real algebraic variety $Z=(Y,\tau)$ is a pair
  $(\mathcal{D},h)$ consisting of a proper segmental divisor
  $\mathcal{D}=\sum[a_{i},b_{i}]\otimes D_{i}$ and a $\tau$-invariant rational function $h$ on $Y$
  such that
  \begin{align}
    \label{eq:flip}
  ((-\mathrm{id}_M)^*\otimes\tau^*)(\DD)=\DD+\{1\}\otimes\mathrm{div}(h).    
  \end{align}
 
  It will be convenient in practice to separate the homomorphism
  coming from the real structure on $Y$ from that coming from the real
  structure on $\GM$. So, up to changing $h$ for $h^{-1}$, we can
  rewrite \eqref{eq:flip} equivalently as
  \[\tau^*(\DD)=\flip{\DD}+\{1\}\otimes\mathrm{div}(h),\] 
  where
  $\flip{\DD}=((-\mathrm{id}_M)^*\otimes\mathrm{id})(\DD)=\sum[-b_{i},-a_{i}]\otimes
  D_{i}$.
\end{defn}

Remark that our definition of phs-pair agrees with
\cite[Definition~5.8]{La15} in the particular case where $Z$ is a
normal real curve.

\begin{lem}\label{lem:phs-basics} Let $(\mathcal{D},h)$ be a phs-pair
  on a normal semi-projective real algebraic variety
  $Z=(Y,\tau)$. Then the following hold:
  \begin{enumerate}[(i)]
\item The sheaf $\mathcal{O}_Y(\mathcal{D}(m))$ has a nonzero global section for all  $m\in \mathbb{Z}$.
  \item The real structure $\tau$ induces an isomorphism of
    $\Gamma(Y,\mathcal{O}_Y)$-modules
\begin{equation}
  \tau_{m}^{*}:\Gamma(Y,\mathcal{O}_{Y}(\mathcal{D}(m)))\stackrel{\simeq}{\longrightarrow}\Gamma(Y,\mathcal{O}_{Y}(\mathcal{D}(-m))),\quad
  g\mapsto h^{m}\cdot \tau^{*}g, \quad \mbox{for all } m \in \mathbb{Z}.\label{eq:graded-real-involution}
\end{equation}
Furthermore, $\tau_{0}^*=\Gamma(\tau)^{*}$ and $\tau_{-m}^*\circ\tau_{m}^*=\mathrm{id}$.

  \end{enumerate}
\end{lem}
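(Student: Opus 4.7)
The plan is to prove (ii) first as a direct computation from the phs-pair relation, and then deduce (i) from it together with the properness hypothesis. The key preliminary step is to evaluate the defining identity $\tau^*(\DD)=\flip{\DD}+\{1\}\otimes\divi(h)$ by $\mathrm{ev}_m\otimes\mathrm{id}$: a direct check from the definition of $\mathrm{ev}_m$ on the interval $[-b,-a]$ (splitting on the sign of $m$) shows that $\flip{\DD}(m)=\DD(-m)$, so evaluation yields the numerical identity
\[\tau^*\DD(m)=\DD(-m)+m\cdot\divi(h),\qquad m\in\ZZ,\]
which drives the rest of the proof.

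For (ii), starting from $g$ with $\divi(g)+\DD(m)\geq 0$, the one-line computation
\[\divi(h^m\tau^*g)+\DD(-m)=m\,\divi(h)+\tau^*\divi(g)+\tau^*\DD(m)-m\,\divi(h)=\tau^*\bigl(\divi(g)+\DD(m)\bigr)\geq 0\]
shows $h^m\tau^*g\in\Gamma(Y,\OO_Y(\DD(-m)))$, so $\tau_m^*$ is well-defined. The relation $\tau_{-m}^*\circ\tau_m^*=\mathrm{id}$ follows at once from the $\tau$-invariance $\tau^*h=h$ and the involutivity $(\tau^*)^2=\mathrm{id}$, via
\[\tau_{-m}^*(\tau_m^*g)=h^{-m}\tau^*(h^m\tau^*g)=h^{-m}(\tau^*h)^m(\tau^*)^2 g=g,\]
so $\tau_m^*$ and $\tau_{-m}^*$ are mutually inverse. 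When $m=0$ the map reduces to $\tau^*$ restricted to $\Gamma(Y,\OO_Y)$, which is precisely the co-morphism $\Gamma(\tau)^*$ by the very definition of $\Gamma(\tau)$ recorded in Section~\ref{subsec:Real-structures}.

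For (i), the case $m=0$ is trivial, as $\DD(0)=0$ and $1\in\Gamma(Y,\OO_Y)$. For $m\neq 0$, semi-ampleness of $\DD(m)$ provides a nonzero global section of $\OO_Y(k\DD(m))=\OO_Y(\DD(km))$ for some $k\geq 1$. The main obstacle will be to descend from such a section in degree $km$ to one in degree $m$ itself. I plan to combine three ingredients: the sub-additivity $\DD(m)+\DD(m')\leq\DD(m+m')$ (which follows coefficient-wise from the elementary inequality $\min(ma,mb)+\min(m'a,m'b)\leq\min((m+m')a,(m+m')b)$ and makes $\bigoplus_n\Gamma(Y,\OO_Y(\DD(n)))$ into a $\ZZ$-graded ring under multiplication); the symmetry isomorphisms from (ii), which transfer non-vanishing from $n$ to $-n$; and the bigness of $\DD(m)$ for every $m\neq 0$, which forces the weight support of this graded ring to generate $\ZZ$ as a subgroup. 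Together these propagate non-vanishing from some $km$ to every $m\in\ZZ$.
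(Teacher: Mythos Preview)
Your argument for (ii) is correct and follows the same route as the paper, with the divisor computation and the verification $\tau_{-m}^*\circ\tau_m^*=\mathrm{id}$ spelled out more explicitly than the paper does.

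For (i), what you have is a plan rather than a proof. You correctly observe that sub-additivity makes the weight support $S=\{m:\Gamma(Y,\OO_Y(\DD(m)))\neq 0\}$ a sub-semigroup, and your use of (ii) to conclude $S=-S$, hence that $S$ is a subgroup of $\ZZ$, is a pleasant simplification that the paper does not take. But the crux --- that $S$ generates $\ZZ$, equivalently that it contains two coprime integers --- is asserted and not argued; semiampleness alone only gives $S\supseteq n_0\ZZ$ for some $n_0$, which is not enough. The paper closes this gap concretely: bigness of $\DD(1)$ (and of $\DD(-1)$) gives nonzero sections $f\in\Gamma(Y,\OO_Y(\DD(e)))$ and $g\in\Gamma(Y,\OO_Y(\DD(-e')))$ for \emph{relatively prime} positive integers $e,e'$ (the point being that a big divisor $D$ has $h^0(nD)>0$ for all $n\gg 0$, so one may choose consecutive values); then for arbitrary $m$ one writes $m=ae-be'$ with $a,b\geq 0$ and checks via sub-additivity that $f^ag^b\in\Gamma(Y,\OO_Y(\DD(m)))$. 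To complete your version you need exactly this input from bigness; once you have two coprime elements in $S$, your subgroup observation finishes the argument in one line.
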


\begin{proof} 
  The first assertion, in analogy with
  Lemma~\ref{lem:Charac-S1-actions}~(i), is again a standard fact
  for proper segmental divisors. Since $\mathcal{D}(\pm 1)$ is a big,
  there exist relatively prime positive integers $e$ and $e'$ and
  non-zero global sections $f$ and $g$ of
  $\mathcal{O}_Y(\mathcal{D}(e))$ and
  $\mathcal{O}_Y(\mathcal{D}(-e'))$, respectively. Now, for every
  integer $m\in \ZZ$, there exists integers $a<0$ and $b>0$ such that
  $ae-be'=m$. Then $f^ag^b$ is non-zero global section of
  $\mathcal{D}(ae-be')=\mathcal{D}(m)$ as desired.
 
  For the second assertion, the fact that $\tau_{0}^*=\Gamma(\tau)^{*}$ follows from the definition
  of the real structure $\Gamma(\tau)$ on   $\mathrm{Spec}(\Gamma(Y,\mathcal{O}_{Y})$ (see $\S$ \ref{section2}). Given
  $m\in \mathbb{Z}\setminus\{0\}$, it follows from the definition of a
  phs-pair that   $\tau^*(\mathcal{D})(m)=\mathcal{D}(-m)+m\divi(h)$. This implies
  that the homomorphism $\tau_{m}^{*}$ in   \eqref{eq:graded-real-involution} is well-defined. Since $\tau^{*}$
  is an involution, $\tau_{-m}^{*}\circ \tau_{m}^{*}$ is the identity
  of $\Gamma(Y,\mathcal{O}_{Y}(\mathcal{D}(m)))$, and so   $\tau_{m}^{*}$ is an isomorphism with inverse $\tau_{-m}^{*}$. 
\end{proof}

\subsection{Real Altmann-Hausen presentations}

In this section, we state and prove our main theorem giving a
geometrico-combinatorial presentation of normal real affine varieties endowed
with a circle action.

Let $Y$ be a normal semi-projective complex variety and let
$\mathcal{D}$ be a proper segmental divisor on $Y$. Then it follows
from \cite[Theorem 3.1]{AH} that the $\mathbb{C}$-scheme
\[V=V(Y,\mathcal{D}):=\mathrm{Spec}\left(\bigoplus_{m\in\mathbb{Z}}
    \Gamma\left(Y,\mathcal{O}_{Y}(\mathcal{D}(m))\right)\right)
  \]
  is a normal complex affine variety of dimension $\dim
  Y+1$. Furthermore, the $\mathbb{Z}$-grading of its coordinate ring
  uniquely determines an effective $\GM$-action
  $\mu:\GM\times V\rightarrow V$ with algebraic quotient isomorphic to
  $\mathrm{Spec}(\Gamma(Y,\mathcal{O}_{Y})$ and whose AH-quotient is
  birationally dominated by $Y$.

  \begin{thm} \label{thm:real-AH-Main} %
    Let $Z=(Y,\tau)$ be a normal semi-projective real algebraic
    variety, represented by a complex variety $Y$ with real structure
    $\tau$, and let $(\mathcal{D},h)$ be a phs-pair on $Z$. Then the
    following hold:
    \begin{enumerate}[(1)]
    \item The normal complex affine variety $V=V(Y,\mathcal{D})$
      carries a real structure $\sigma$ such that
      $\sigma\circ\mu=\mu\circ(\rho\times\sigma)$.
  
    \item The triple $(V,\sigma, \mu)$ is a normal real affine
      algebraic variety $X(Z,(\mathcal{D},h))$ of dimension $\dim Z+1$
      endowed with an effective $\mathbb{S}^{1}$-action with algebraic
      quotient  $(\mathrm{Spec}(\Gamma(Y,\mathcal{O}_{Y}),\Gamma(\tau))$ and
      real AH-quotient birationally dominated to $Z$.

 \item Conversely, every normal real affine variety $X=(V,\sigma,\mu)$
   endowed with a effective $\mathbb{S}^{1}$-action is equivariantly
   isomorphic to $X(Z,(\mathcal{D},h))$ for a suitable phs-pair
   $(\mathcal{D},h)$ on its real AH-quotient $Z=(Y(V),\tau)$.

 \end{enumerate}
\end{thm}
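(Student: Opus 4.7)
The plan is to exploit the isomorphisms $\tau_m^*\colon A_m\to A_{-m}$ supplied by Lemma \ref{lem:phs-basics} to assemble $\sigma$ in the forward direction, and to distill $h$ from the comparison between $\sigma^*$ and $\tau^*$ in the converse.

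For part (1), I would set $A=\bigoplus_m A_m$ with $A_m=\Gamma(Y,\OO_Y(\DD(m)))$ and define $\sigma^*\colon A\to A$ by declaring $\sigma^*|_{A_m}=\tau_m^*$, i.e.\ $\sigma^*(f)=h^m\tau^*(f)$ for $f\in A_m$. Multiplicativity is immediate from the explicit formula; the involution property $\sigma^*\circ\sigma^*=\mathrm{id}$ is already contained in the last statement of Lemma \ref{lem:phs-basics}; and each $\tau_m^*$ is anti-regular because it factors as the anti-regular map $\tau^*$ followed by multiplication by the $\mathbb{C}$-linear scalar $h^m$. Hence $\sigma^*$ is the co-morphism of a real structure $\sigma$ on $V$. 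Compatibility with $\rho$ is checked on a homogeneous $f\in A_m$: both $\mu^*\sigma^*(f)$ and $(\sigma^*\otimes\rho^*)\mu^*(f)$ equal $\tau_m^*(f)\otimes t^{-m}$, the latter because $\rho^*(t^m)=t^{-m}$.

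Part (2) is then essentially bookkeeping. Normality of $V$, the identification of the complex algebraic quotient $V/\!/\GM$ with $\spec(A_0)=\spec(\Gamma(Y,\OO_Y))$, and the fact that $Y$ birationally dominates the complex AH-quotient $Y(V)$ all come from \cite[Theorem 3.1]{AH}. The real content reduces to two observations: $\sigma^*|_{A_0}=\Gamma(\tau)^*$, which is the very definition of $\tau_0^*$, so that the real algebraic quotient is $(\spec(\Gamma(Y,\OO_Y)),\Gamma(\tau))$; and the ideal $\langle A_d\cdot A_{-d}\rangle\subset A_0$ cutting out the blow-up centre in Definition \ref{def:AH-quotient} is $\sigma^*$-stable, because $\sigma^*$ swaps $A_d$ and $A_{-d}$, so the lift of $\Gamma(\tau)$ to the blow-up is precisely the real structure of the real AH-quotient.

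For the converse (3), I would first invoke the complex AH theorem to realize $V\simeq V(Y(V),\DD)$ for some proper segmental divisor $\DD$ on $Y(V)$, so that the real structure $\sigma$ descends through $V\to Y_0(V)\to Y(V)$ to the real structure $\tau$ used in the definition of the real AH-quotient. Since $\sigma^*|_{\fract(A_0)}$ and $\tau^*|_{\fract(Y(V))}$ coincide, I fix a non-zero $f\in A_1$ (which exists by Lemma \ref{lem:Charac-S1-actions}) and set
\[h:=\sigma^*(f)/\tau^*(f)\in\fract(Y(V)).\]
Independence of $h$ from the choice of $f$ follows from the agreement of $\sigma^*$ and $\tau^*$ on weight-zero rational functions; the formula $\sigma^*(g)=h^m\tau^*(g)$ then propagates to every $A_m$ by multiplicativity, using the non-vanishing of $A_1$ and $A_{-1}$ to reach negative weights through products; finally, $\tau$-invariance of $h$ is forced by $(\sigma^*)^2=\mathrm{id}$ applied to $f$.

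The hard step is to upgrade the isomorphism of $A_0$-modules $f\mapsto h^m\tau^*(f)\colon A_m\to A_{-m}$ to the divisorial identity $\tau^*\DD(m)=\DD(-m)+m\divi(h)$ for every $m\in\mathbb{Z}$, since a priori equality of spaces of global sections inside $\fract(Y(V))$ need not force equality of the underlying $\QQ$-Cartier divisors. My approach is to introduce the candidate segmental divisor $\tilde{\DD}:=\flip{\tau^*\DD}+\{1\}\otimes\divi(h)$, verify that it is proper, and show that $\Gamma(Y(V),\OO_{Y(V)}(\tilde{\DD}(m)))=A_m$ as subspaces of $\fract(Y(V))$ for every $m$. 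The uniqueness in the complex AH correspondence over the AH-quotient $Y(V)$ then forces $\tilde{\DD}=\DD$, which is exactly the phs-pair relation. Feeding $(\DD,h)$ back into the construction of (1) recovers $(V,\sigma,\mu)$ equivariantly by design, completing the converse.
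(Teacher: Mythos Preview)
Your argument for parts (1) and (2) matches the paper's proof essentially verbatim: assemble $\sigma^*=\bigoplus_m\tau_m^*$ from Lemma~\ref{lem:phs-basics} and read off the quotient data from the complex Altmann--Hausen theorem. For part (3), your definition of $h$ is equivalent to the paper's $h=s\sigma^*(s)$: once the identification $A_m=\Gamma(Y,\OO_Y(\DD(m)))\cdot s^m$ is fixed, your $f\in A_1$ corresponds to some $g\in K(Y)$ via $f=gs$, and a two-line computation shows $\sigma^*(f)/\tau^*(f)=s\sigma^*(s)$.

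The one point where you diverge from the paper is the ``hard step''. The paper proves $\tau^*\DD(m)=\DD(-m)+m\divi(h)$ directly: choose $m$ with $\DD(\pm m)$ Cartier and globally generated, take an open cover $\{U_i\}$ on which they are principal, and use that a local generator $g$ with $\divi(g)|_{U_i}=-\DD(m)|_{U_i}$ satisfies $\sigma^*(gs^m)\in A_{-m}$ to obtain $-\tau^*\DD(m)+m\divi(h)+\DD(-m)\geq 0$; the reverse inequality comes by symmetry. You instead define $\tilde{\DD}=\flip{\tau^*\DD}+\{1\}\otimes\divi(h)$, establish $\Gamma(Y,\OO_Y(\tilde{\DD}(m)))=\Gamma(Y,\OO_Y(\DD(m)))$, and invoke ``uniqueness in the complex AH correspondence over $Y(V)$''. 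This uniqueness, however, is not a citable statement from \cite{AH}: Theorem~8.8 there only gives $\psi^*\DD_2=\DD_1+\{1\}\otimes\divi(f)$ for some automorphism $\psi$ and some $f$, not $\DD_1=\DD_2$. The stronger claim you need---that two proper segmental divisors with identical section spaces in $K(Y)$ for every $m$ must coincide---is true, but its proof is exactly the paper's local argument (a globally generated Cartier divisor is recovered from its sections as $-\divi(g)$ on the locus where $g$ generates, giving $\tilde{\DD}(m)\geq\DD(m)$ and then equality by symmetry). So your packaging is more conceptual, but the missing lemma unwinds to the same computation; you should either supply that short argument or cite a precise reference rather than appeal to an unspecified uniqueness.
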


\begin{proof}  
  Since $(\mathcal{D},h)$ is a phs-pair, it follows from Lemma
  \ref{lem:phs-basics} (ii) that there exist isomorphisms
  $\tau_m^{*}:\Gamma(Y,\mathcal{O}_{Y}(\mathcal{D}(m)))
  \stackrel{\simeq}{\longrightarrow}
  \Gamma(Y,\mathcal{O}_{Y}(\mathcal{D}(-m)))$ for every
  $m\in \mathbb{Z}$. These collect into an involution
  $\sigma^{*}=\bigoplus_{m\in\mathbb{Z}}\tau_{m}^{*}$ on the direct
  sum
  $A=\bigoplus_{m\in\mathbb{Z}}\Gamma(Y,\mathcal{O}_{Y}(\mathcal{D}(m)))$. The
  latter corresponds to a real structure $\sigma$ on $V$ such that by
  construction $\sigma\circ\mu=\mu\circ(\rho\times\sigma)$. It
  then follows from Lemma \ref{lem:Charac-S1-actions} that
  $(V,\sigma,\mu)$ represents a normal real affine variety $X$ endowed
  with an effective $\mathbb{S}^{1}$-action. The facts that
  $X/\!/\mathbb{S}^{1}\simeq
  \mathrm{Spec}(\Gamma(Y,\mathcal{O}_{Y}),\Gamma(\tau))$ and that the
  real AH-quotient of $X$ is birationnally dominated by $Z$ follow
  from the corresponding assertions for $V$ and the construction of
  $\sigma$. This proves 1) and 2).

For the converse 3), let $A=\bigoplus_{m\in \mathbb{Z}} A_m$ be the decomposition of the coordinate ring of $V$ into 
semi-invariants sub-spaces for the $\GM$-action $\mu$ and let $(Y_0=\mathrm{Spec}(A_0),\overline{\sigma})$ and $(Y,\tau)$ be the algebraic quotient and the real AH-quotient of $X$ respectively. By construction, $Y$ is birational to $Y_0$, and we can therefore identify its field of rational functions with the field of fractions $\mathrm{Frac}(A_0)$ of $A_0$. By Lemma \ref{lem:Charac-S1-actions} (i), $\mu$ admits a semi-invariant regular function $s$ of weight $1$. Then \cite[Theorem 3.4]{AH} guarantees the existence of a proper segmental divisor $\mathcal{D}$ on $Y$ such that for every $m\in \mathbb{Z}$ the sub-$A_0$-module $s^{-m}A_m$ of $\mathrm{Frac}(A_0)$ is equal to $\Gamma(Y,\mathcal{O}_{Y}(\mathcal{D}(m)))$ and such that $A$ is equal to 
$\bigoplus_{m\in\mathbb{Z}}\Gamma(Y,\mathcal{O}_{Y}(\mathcal{D}(m))\cdot s^m$ as a graded sub-${A}_0$-algebra of $\mathrm{Frac}(A_0)(s)$. 

We will now check that $h=s\sigma^*(s)$ is a $\tau$-invariant rational
function on $Y$ making $(\mathcal{D},h)$ a phs-pair. Note that by
construction, $h$ is a $\sigma$-invariant rational function on
$V$. Since by Lemma \ref{lem:Charac-S1-actions} (ii), $\sigma^*(s)$ is
a semi-invariant regular function of weight $-1$, it follows that $h$
is a also $\GM$-invariant, hence an element of
$\mathrm{Frac}(A_0)$. Since $\sigma^*$ coincides by definition with
$\overline{\sigma}$ on $A_0$ and $\tau$ is lifted from
$\overline{\sigma}$, we can thus view $h$ as a $\tau$-invariant
rational function on $Y$. Let $m\geq 1$ be such that $\DD(m)$ and
$\DD(-m)$ are Cartier and globally generated, and let
$\{U_i\}_{i\in I}$ be an open cover of $Y$ such that $\DD(m)$ and
$\DD(-m)$ are principal in every $U_i$.  Then, by the construction of
$\mathcal{D}$ in \cite[Theorem 3.4]{AH}, for every $i\in I$ there
exists a rational function $g \in \mathrm{Frac}(A_0)$ such that
$gs^m\in A_m$ and $\divi(g)|_{U_i}+\DD(m)|_{U_i}=0$. In particular, we
have
$\divi(\tau^*g)|_{\tau^{-1}(U_i)}=-\tau^*\DD(m)|_{\tau^{-1}(U_i)}$. Since
$\sigma^*(gs^m)=(\tau^*g)h^ms^{-m}\in A_{-m}$, it follows from the construction of $\DD$ that 
$(\tau^*g)h^m\in \Gamma(Y,\mathcal{O}_{Y}(\mathcal{D}(-m)))$ and hence, we have
$$\divi(\tau^*g)+m\divi(h)+\DD(-m)\geq 0.$$ 
Substituting $\divi(\tau^*g)|_{\tau^{-1}(U_i)}=-\tau^*\DD(m)|_{\tau^{-1}(U_i)}$, we
conclude that
$$(-\tau^*\DD(m)+m\divi(h)+\DD(-m))|_{\tau^{-1}(U_i)}\geq 0.$$ Since
$\{\tau^{-1}(U_i)\}_{i\in I}$ is also an open cover of $Y$, this
inequality is independant on the open subset $U_i$, and we obtain
\begin{align}
  \label{eq:main-th-1}
  -\tau^*\DD(m)+ m \divi(h)+\DD(-m)\geq 0
\end{align}
for every $m\in \mathbb{Z}$ since all the terms in \eqref{eq:main-th-1} are
linear on $m$. 
Taking now a rational function $g\in \mathrm{Frac}(A_0)$ such that $gs^{-m}\in A_{-m}$ and
$\DD(-m)|_{U_i}=-\divi(g)|_{U_i}$, we find by the same argument that $-\tau^*\DD(-m)-m\divi(h)+\DD(m)\geq 0$, hence applying the involution $\tau^*$ to this inequality that 
\begin{align}
  \label{eq:main-th-2}
  -\tau^*\DD(m)+m\divi(h)+\DD(-m)\leq 0
\end{align}
for every $m\in \mathbb{Z}$. The inequalities \eqref{eq:main-th-1} and \eqref{eq:main-th-2} together yield that
$\tau^*(\DD)(m)=\DD(-m)+m\divi(h)$ for every $m\in \mathbb{Z}$, hence that 
$\tau^{*}\mathcal{D}=\flip{\mathcal{D}}+\{1\}\otimes\divi(h)$.
\end{proof}

\begin{rem} \label{rem:semi-inv-w1} Given a normal real affine variety
  $X=(V,\sigma,\mu)$ endowed with a effective $\mathbb{S}^{1}$-action,
  the construction of a proper segmental divisor $\mathcal{D}$ on the
  AH-quotient $Y(V)$ such that $V$ is $\GM$-equivariantly isomorphic
  to $V=V(Y,\mathcal{D})$ depends on the non-canonical choice of a
  semi-invariant rational function $s$ of weight $1$ on $V$. Different
  choices for $s$ lead of course to different segmental divisors
  $\mathcal{D}_s$ on $Y(V)$. In the proof of Theorem
  \ref{thm:real-AH-Main}, we made a particular choice for $s$, but the
  proof actually shows that for every other choice of $s$, the
  rational function $h_s=s\sigma^*(s)$ on $Z=(Y(V),\tau))$ is
  $\tau$-invariant and $(\mathcal{D}_s,h)$ is a phs-pair on $Z$.
\end{rem}

\begin{defn} A couple consisting of a real normal semiprojective
  variety $Z=(Y,\tau)$ and a phs-pair $(\DD,h)$ on it is called
  \emph{minimal} if $Z$ is the real AH-quotient of $X(Z,(\DD,h))$.
\end{defn}

It follows from the definition of the real AH-quotient of
$X(Z,(\DD,h))$ that a couple $(Z,(\DD,h))$ is minimal if and only if
$Y$ is the AH-quotient of $V(Y,\DD)$. This definition is thus
equivalent to requiring that the couple $(Y,\DD)$ is minimal in the
sense of \cite[Definition~8.7]{AH}.

\begin{cor}\label{cor:existence-h}
  Let $X=(V,\sigma,\mu)$ be a normal real affine variety endowed with
  an effective $\mathbb{S}^{1}$-action and let $\DD$ be any proper
  segmental divisor on a complex semiprojective variety $Y$ such that
  there exists a $\GM$-equivariant isomorphism
  $\phi:V\stackrel{\simeq}{\rightarrow} V(Y,\DD)$.

  If the real structure $\overline{\sigma}$ on
  $V/\!/\GM \simeq V(Y,\DD)/\!/\GM$ induced by $\sigma$ lifts to a
  real structure $\tau$ on $Y$, which holds for instance if $(Y,\DD)$
  is minimal, then there exists a $\tau$-invariant rational function
  $h$ on $Z=(Y,\tau)$ making $(Z,(\DD,h))$ a minimal phs-pair such
  that $X(Z,(\DD,h))$ is $\mathbb{S}^{1}$-equivariantly isomorphic to
  $X$.
\end{cor}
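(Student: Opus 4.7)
The plan is to reduce to the setup of Theorem~\ref{thm:real-AH-Main} and then repeat the construction of $h$ carried out in its converse part. First I transport the real structure through $\phi$: setting $\sigma':=\phi\circ\sigma\circ\phi^{-1}$ produces a real structure on $V(Y,\mathcal{D})$ which, because $\phi$ is $\GM$-equivariant, still satisfies $\sigma'\circ\mu=\mu\circ(\rho\times\sigma')$. After this reduction we may assume $V=V(Y,\mathcal{D})$ and $A_m=\Gamma(Y,\mathcal{O}_Y(\mathcal{D}(m)))$ for all $m\in\mathbb{Z}$; by Lemma~\ref{lem:Charac-S1-actions}(ii) the involution $\sigma^{*}$ exchanges $A_m$ and $A_{-m}$, and the hypothesis that $\overline{\sigma}$ lifts to $\tau$ translates precisely into the compatibility $\tau^{*}|_{\Gamma(Y,\mathcal{O}_Y)}=\sigma^{*}|_{A_0}$.

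Next I would produce $h$ as follows. The effectivity of the $\GM$-action and the properness of $\mathcal{D}$ provide, via the argument of Lemma~\ref{lem:phs-basics}(i), a non-zero semi-invariant $s\in A_1$. Since $\sigma^{*}(s)\in A_{-1}$, the product $h:=s\cdot\sigma^{*}(s)$ lies in $A_1\cdot A_{-1}\subset A_0$, so it is a regular, hence rational, function on $Y$; the identity $\sigma^{*}(h)=\sigma^{*}(s)\cdot s=h$, together with the lifting hypothesis, yields $\tau^{*}(h)=h$.

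The hard part is then to verify the phs-pair relation $\tau^{*}\mathcal{D}=\flip{\mathcal{D}}+\{1\}\otimes\divi(h)$. I would rerun the argument at the end of the proof of Theorem~\ref{thm:real-AH-Main}: for $m\geq 1$ with $\mathcal{D}(\pm m)$ Cartier and globally generated, pick an open cover $\{U_i\}$ of $Y$ on which $\mathcal{D}(m)$ is principal, say $\mathcal{D}(m)|_{U_i}=-\divi(g)|_{U_i}$ with $gs^m\in A_m$. The computation $\sigma^{*}(gs^m)=(\tau^{*}g)h^{m}s^{-m}\in A_{-m}$ forces $-\tau^{*}\mathcal{D}(m)+m\divi(h)+\mathcal{D}(-m)\geq 0$; running the symmetric argument with $-m$ and then applying $\tau^{*}$ gives the reverse inequality, and combining the two produces $\tau^{*}\mathcal{D}(m)=\mathcal{D}(-m)+m\divi(h)$ for every $m\in\mathbb{Z}$. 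Since this step only uses $A_m=\Gamma(Y,\mathcal{O}_Y(\mathcal{D}(m)))$ and the lifting of $\tau$, no minimality of $(Y,\mathcal{D})$ is needed here.

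Finally, the real structure produced from $(\mathcal{D},h)$ by Theorem~\ref{thm:real-AH-Main}(1), namely $g\mapsto h^{m}\tau^{*}g$ on $A_m$, coincides with $\sigma^{*}$: a short direct check using $\sigma^{*}(s)=h/s$ and $\tau^{*}|_{A_0}=\sigma^{*}|_{A_0}$ shows the two match on every graded piece, giving the asserted $\mathbb{S}^{1}$-equivariant isomorphism $X(Z,(\mathcal{D},h))\simeq X$. Minimality of the resulting phs-pair then follows from the equivalence recorded after the definition preceding the corollary: $(Z,(\mathcal{D},h))$ is minimal if and only if $(Y,\mathcal{D})$ is minimal in the sense of \cite{AH}, which is exactly the sufficient condition singled out in the hypothesis; this in turn is consistent with Remark~\ref{rem:semi-inv-w1}, where different choices of $s$ are noted to yield different pairs $(\mathcal{D}_s,h_s)$ but the same real $\mathbb{S}^{1}$-variety.
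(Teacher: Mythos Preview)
Your overall strategy mirrors the paper's one-line proof exactly: rerun the argument of Theorem~\ref{thm:real-AH-Main}(3) with the given $Y$ in place of the AH-quotient $Y(V)$. However, there is a concrete gap in your choice of the semi-invariant $s$.

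You take $s$ to be a nonzero \emph{regular} element of $A_1=\Gamma(Y,\mathcal{O}_Y(\mathcal{D}(1)))$. But the $s$ needed here is the canonical \emph{rational} semi-invariant of weight $1$ on $V(Y,\mathcal{D})$, namely the element corresponding to the constant function $1$ placed in degree $1$ (so that $A_m=\Gamma(Y,\mathcal{O}_Y(\mathcal{D}(m)))\cdot s^m$ inside $\mathrm{Frac}(A_0)(s)$, cf.\ the proof of Theorem~\ref{prop:Equiv-Iso}). If instead $s=g_0\cdot\chi$ with $g_0\in\Gamma(Y,\mathcal{O}_Y(\mathcal{D}(1)))$ your regular choice and $\chi$ the canonical one, then a direct computation gives
\[
h_{\mathrm{yours}}=s\,\sigma^{*}(s)=g_0\cdot\tau^{*}(g_0)\cdot h_{\mathrm{correct}},
\]
so $\divi(h_{\mathrm{yours}})$ differs from $\divi(h_{\mathrm{correct}})$ by $\divi(g_0)+\tau^{*}\divi(g_0)$, and $(\mathcal{D},h_{\mathrm{yours}})$ fails to be a phs-pair in general. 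Concretely, for $Y=\mathbb{A}^{1}_{\mathbb{C}}$, $\tau$ complex conjugation, $\mathcal{D}=[0,1]\otimes\{0\}$, and $s=w\in A_1=\mathbb{C}[w]$, one gets $h_{\mathrm{yours}}=w^{3}$, whereas $\tau^{*}\mathcal{D}-\flip{\mathcal{D}}=\{1\}\otimes\{0\}\neq\{1\}\otimes\divi(w^{3})$. The downstream step ``$(\tau^{*}g)h^{m}s^{-m}\in A_{-m}$ forces $-\tau^{*}\mathcal{D}(m)+m\divi(h)+\mathcal{D}(-m)\geq 0$'' silently drops the factor $g_0^{-m}$ coming from $s^{-m}$, which is precisely where the argument breaks. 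The fix is simply to take $s$ to be the canonical rational weight-$1$ marker rather than an element of $A_1$; with that choice your verification and your final check that the two real structures agree go through verbatim, and you recover the paper's proof.
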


\begin{proof}
  This follows from the same argument as in the proof of
  Theorem~\ref{thm:real-AH-Main} (3), taking $Y$ as in the corollary
  instead of the AH-quotient $Y(V)$ of $V$.
\end{proof}

\begin{thm} \label{prop:Equiv-Iso} %
  For $i=1,2$, let $Z_i=(Y_i,\tau_i)$ be normal real semiprojective
  varieties and let $X_i=X(Z_i,(\mathcal{D}_i,h_i))$ be normal real
  affine varieties with effective $\mathbb{S}^1$-actions determined by
  respective phs-pairs $(\mathcal{D}_i,h_i)$ on $Z_i$.

  Then $X_1$ and $X_2$ are $\mathbb{S}^1$-equivariantly isomorphic and
  only if there exits a third normal real affine variety $X=X(Z,(\DD,h))$
  with effective $\mathbb{S}^1$-action for a certain phs-pair
  $(\DD,h)$ on a normal real semiprojective variety $Z=(Y,\tau)$, real birational morphisms
  $\psi_i:Z_i\rightarrow Z$ and rational functions $f_i$ on $Y_i$,
  $i=1,2$, such that
   \[ \psi_i^{*}(\mathcal{D})=\mathcal{D}_i+\{1\}\otimes\mathrm{div}(f_i) \quad \textrm{and} \quad \psi_i^*(h)=(f_i\cdot\tau_i^*f_i)\cdot h_i.\]
\end{thm}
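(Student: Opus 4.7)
The plan is to prove both directions by explicit graded-algebra computations tracking the $\GM$-action and the real structure simultaneously, with all transformations controlled by the choice of a weight-one semi-invariant.

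\emph{Sufficiency.} Given $(Z,(\mathcal{D},h))$, $\psi_i$ and $f_i$, I will construct $\mathbb{S}^1$-equivariant isomorphisms $X_i\cong X$ for $i=1,2$ and compose them. Since $\psi_i$ is a real birational morphism of normal semi-projective varieties, pullback identifies $\Gamma(Y,\mathcal{O}_Y(\mathcal{D}(m)))$ with $\Gamma(Y_i,\mathcal{O}_{Y_i}(\psi_i^*\mathcal{D}(m)))$; the hypothesis $\psi_i^*\mathcal{D}=\mathcal{D}_i+\{1\}\otimes\divi(f_i)$ then lets me identify the latter with $\Gamma(Y_i,\mathcal{O}_{Y_i}(\mathcal{D}_i(m)))$ via multiplication by $f_i^m$. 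Summing over $m\in\ZZ$ yields a $\GM$-equivariant complex isomorphism $V(Y_i,\mathcal{D}_i)\cong V(Y,\mathcal{D})$. Compatibility with the real structures described in Lemma~\ref{lem:phs-basics}(ii) amounts to checking that $g\mapsto f_i^m\psi_i^*g$ intertwines $g\mapsto h^m\tau^*g$ with $g'\mapsto h_i^m\tau_i^*g'$; a direct computation using $\psi_i^*\tau^*=\tau_i^*\psi_i^*$ shows this reduces to $(\psi_i^*h)^m=(f_i\cdot h_i\cdot \tau_i^*f_i)^m$ for all $m$, i.e., to the second hypothesis $\psi_i^*h=(f_i\cdot\tau_i^*f_i)\cdot h_i$.

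\emph{Necessity.} Identify $X_1$ and $X_2$ via the given isomorphism as a common real $\mathbb{S}^1$-variety $X=(V,\sigma,\mu)$, and take $Z=(Y(V),\tau)$ to be its real AH-quotient. Theorem~\ref{thm:real-AH-Main}(3) produces a phs-pair $(\mathcal{D},h)$ on $Z$ with $X=X(Z,(\mathcal{D},h))$, constructed from a nonzero semi-invariant $s\in A_1$ so that $h=s\cdot\sigma^*(s)$. Theorem~\ref{thm:real-AH-Main}(2) applied to the presentations $X=X(Z_i,(\mathcal{D}_i,h_i))$ provides real birational morphisms $\psi_i:Z_i\to Z$. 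By Remark~\ref{rem:semi-inv-w1}, each phs-pair $(\mathcal{D}_i,h_i)$ corresponds to some nonzero semi-invariant $s_i\in A_1$ with $h_i=s_i\cdot\sigma^*(s_i)$. Setting $f_i=s/s_i$, this is a nonzero $\GM$-invariant rational function on $V$, hence an element of $\mathrm{Frac}(A_0)=\mathrm{Frac}(Y_i)$. The fractional-ideal descriptions $\Gamma(Y,\mathcal{O}_Y(\mathcal{D}(m)))=s^{-m}A_m$ and $\Gamma(Y_i,\mathcal{O}_{Y_i}(\mathcal{D}_i(m)))=s_i^{-m}A_m=f_i^m\cdot s^{-m}A_m$ translate into $\psi_i^*\mathcal{D}=\mathcal{D}_i+\{1\}\otimes\divi(f_i)$, while substituting $s=f_is_i$ into $h=s\sigma^*(s)$ yields $h=f_i\cdot\sigma^*(f_i)\cdot h_i$; identifying $\sigma^*|_{\mathrm{Frac}(A_0)}$ with $\tau_i^*$ pulled back via $\psi_i$ then gives $\psi_i^*h=(f_i\cdot\tau_i^*f_i)\cdot h_i$.

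The main obstacle lies in the necessity direction: one needs to know that any phs-pair presentation of $X$ admits a real birational morphism from its base to the real AH-quotient (which is exactly the content of Theorem~\ref{thm:real-AH-Main}(2)), and that the non-canonical choice of weight-one semi-invariant is precisely what implements the equivalence $(\mathcal{D},h)\sim(\mathcal{D}+\{1\}\otimes\divi(f),(f\cdot\tau^*f)\cdot h)$. With this change-of-variable dictionary established, both relations become routine algebraic manipulations.
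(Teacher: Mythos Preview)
Your proof is correct and follows essentially the same approach as the paper: for necessity both take $Z$ to be the real AH-quotient, obtain the $\psi_i$ as the domination morphisms guaranteed by Theorem~\ref{thm:real-AH-Main}(2) (the paper via \cite[Theorem~8.8]{AH}), and derive the two displayed identities by tracking the change of weight-one semi-invariant $f_i=s/s_i$ with $h=s\sigma^*(s)$ and $h_i=s_i\sigma^*(s_i)$; for sufficiency both verify directly that the graded isomorphism $g\mapsto f_i^{\pm m}\psi_i^*(g)$ intertwines the real structures $\tau_m^*$ of Lemma~\ref{lem:phs-basics}(ii). One minor imprecision: your appeal to Remark~\ref{rem:semi-inv-w1} for $h_i=s_i\sigma^*(s_i)$ is slightly off-target since that remark is phrased for the AH-quotient rather than an arbitrary $Z_i$; the identity actually follows from the construction of $\sigma$ in Theorem~\ref{thm:real-AH-Main}(1) applied to $(Z_i,(\mathcal{D}_i,h_i))$, exactly as the paper records (``by construction, $h_i=s_i\sigma_i^*(s_i)$'').
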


\begin{proof}
  Recall that by definition, $X_i=(V_i,\sigma_i,\mu_i)$ where
  $V_i=V(Y_i,\DD_i)$ and $\sigma_i$ is the real structure on $V_i$
  constructed in Theorem~\ref{thm:real-AH-Main} (1). Assume first that
  $X_1$ and $X_2$ are $\mathbb{S}^1$-equivariantly isomorphic, let
  $Z=(Y,\tau)$ be the AH-quotient of $V_1$ endowed with the real
  structure induced by $\sigma_1$. By \cite[Definition~8.7]{AH}, there
  exists a proper segmental divisor $\DD$ on $Y$ such that
  $V=V(Y,\DD)$ is $\GM$-equivariantly isomorphic to $V_1$. It then
  follows from Corollary~\ref{cor:existence-h} that there exists a
  $\tau$-invariant function $h$ on $Y$ such that $(\DD,h)$ is phs-pair
  and $X(Z,(\DD,h))=(V,\sigma,\mu)$ is $\mathbb{S}^1$-equivariantly
  isomorphic to $X_1$. By \cite[Theorem~8.8]{AH}, there exist
  birational morphisms $\psi_i:Y_i\rightarrow Y$ such that
  $\psi_i^{*}(\mathcal{D})=\mathcal{D}_i+\{1\}\otimes\mathrm{div}(f_i)$
  for some rational functions $f_i$ on $Y_i$. Furthermore, the real
  structures $\tau_i$ on $Y_i$ and $\tau$ on $Y$ are all lifts of the
  real structure $\overline{\sigma}_1$ on the algebraic quotient $V_1/\!/\GM$, which is
  birational to $Y_i$ and $Y$ since the $\GM$-actions considered are
  all hyperbolic by Lemma \ref{lem:Charac-S1-actions} (i). Hence, we
  have $\tau\circ\psi_i=\psi_i\circ\tau_i$ which shows that $\psi_i$
  is a real birational morphism. By \cite[Proposition~8.6]{AH}, the
  isomorphism between $V$ and $V_i$ is given by the collection of
  isomorphisms
  \begin{align}
    \label{eq:iso}
    \Psi_i^*:\Gamma(Y,\OO_Y(\DD(m)))\stackrel{\simeq}{\longrightarrow}
    \Gamma(Y_i,\OO_{Y_i}(\DD_i(m))),\quad g\mapsto f_i^{-m}\cdot \psi_i^*(g).
  \end{align}
  Let $s$ and $s_i$ be the regular functions on $V$ and $V_i$
  corresponding respectively to $1$ in degree $1$ in the grading of
  their coordinate rings by the subspaces $\Gamma(Y,\OO_Y(\DD(m)))$
  and $\Gamma(Y,\OO_Y(\DD_i(m)))$. By construction, we have
  $h=s\sigma^*(s)$ and $h_i=s_i\sigma_i^*(s_i)$. Since on the other
  hand $\Psi_i^*(s)=f_is_i$, we have
  \begin{align*}
    \psi_i^*(h)&=\psi_i^*(s\sigma^*(s))=\Psi_i^*(s)\cdot
                 \Psi_i^*\circ\sigma^*(s)=\Psi_i^*(s)\cdot
                 \sigma_i^*\circ\Psi_i^*(s)\\
               &=f_is_i\cdot
                 \sigma_i^*(f_i)\sigma_i^*(s_i) =f_i\cdot\sigma^*_i(f_i)\cdot h_i=(f_i\cdot\tau_i^*f_i)\cdot h_i.
  \end{align*}

  We now prove the converse statement. Let $i=1$ or $i=2$. By
  \cite[Theorem~8.8]{AH}, $V$ is isomorphic to $V_i$ and the
  isomorphism is given by \eqref{eq:iso}. The real structures $\sigma$
  on $V$ and $\sigma_i$ on $V_i$ are given as in the proof of
  Theorem~\ref{thm:real-AH-Main} (1) via the collection of
  isomorphisms \eqref{eq:graded-real-involution}. To conclude that $X$
  is $\mathbb{S}^1$-equivariantly isomorphic to $X_i$ we only need to
  check that $\sigma^*_i\circ\Psi^*_i=\Psi^*_i\circ\sigma^*$. But for
  every $g\in \Gamma(Y,\OO_Y(\DD(m)))$, we have
  \begin{align*}
    \Psi^*_i\circ\sigma^*(g)&=\Psi^*_i(h^m\tau^*(g))
                              =f_i^{-m}\cdot\psi^*_i(h^m)\cdot\psi^*_i\tau^*(g)
                              =f_i^{-m}\cdot f_i^m
                              \cdot\tau_i^*(f_i^m)\cdot
                              h_i^m\cdot\psi^*_i\tau^*(g) \\
                            &=\tau_i^*(f_i^m)\cdot
                              h_i^m\cdot\tau_i^*\psi^*_i(g)=\sigma^*_i(f_i^m\psi^*_i(g))=\sigma^*_i\circ\Psi^*_i(g),
  \end{align*}
  which concludes the proof.
\end{proof}

\begin{cor} \label{cor:AH-iso-same} Let $(Z_i,(\DD_i,h_i))$ be minimal
  couples on real normal semiprojective varieties $Z_i=(Y_i,\tau_i)$,
  $i=1,2$, determining normal real affine varieties
  $X_i=X(Z_i,(\mathcal{D}_i,h_i))$ with effective
  $\mathbb{S}^{1}$-actions.  Then $X_1$ and $X_2$ are
  $\mathbb{S}^1$-equivariantly isomorphic if and only if there exists
  a real isomorphism $\psi:Z_1\rightarrow Z_2$ and a rational function
  $f_1$ on $Y_1$ such that
  \[ \psi^{*}(\DD_2)=\mathcal{D}_1+\{1\}\otimes\mathrm{div}(f_1) \quad
    \textrm{and} \quad \psi^*(h_2)=(f_1\cdot\tau_1^*f_1)\cdot h_1.\]
\end{cor}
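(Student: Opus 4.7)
The plan is to deduce this corollary from Theorem~\ref{prop:Equiv-Iso} by exploiting the uniqueness supplied by the minimality assumption. For the easy ``if'' direction, given a real isomorphism $\psi\colon Z_1\to Z_2$ and a rational function $f_1$ satisfying the stated relations, I would apply Theorem~\ref{prop:Equiv-Iso} with the choices $Z=Z_2$, $(\mathcal{D},h)=(\mathcal{D}_2,h_2)$, $\psi_2=\mathrm{id}_{Z_2}$, $f_2=1$, and $\psi_1=\psi$. The conditions for $i=2$ become trivial, while those for $i=1$ are exactly the hypotheses, so the theorem produces an $\mathbb{S}^1$-equivariant isomorphism between $X_1$ and $X(Z_2,(\mathcal{D}_2,h_2))=X_2$.

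The substantive content is the converse. Assuming an $\mathbb{S}^1$-equivariant isomorphism $X_1\simeq X_2$, Theorem~\ref{prop:Equiv-Iso} provides a normal real semiprojective variety $Z=(Y,\tau)$, a phs-pair $(\mathcal{D},h)$, real birational morphisms $\psi_i\colon Z_i\to Z$, and rational functions $f_i$ on $Y_i$ satisfying the two pullback relations of the theorem. The main obstacle is to promote each $\psi_i$ to a real isomorphism. Inspection of the proof of Theorem~\ref{prop:Equiv-Iso} shows that $Y$ is taken to be the AH-quotient of $V_1$, hence also of $V_2$ since $V_1\simeq V_2$ as $\GM$-varieties. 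By the characterization of minimality recalled in the paragraph after Definition~\ref{def:AH-quotient}---namely that $(Z_i,(\mathcal{D}_i,h_i))$ is minimal if and only if $(Y_i,\mathcal{D}_i)$ is minimal in the sense of \cite[Definition~8.7]{AH}---each $Y_i$ is itself the AH-quotient of $V_i$. The uniqueness statement \cite[Theorem~8.8]{AH} then forces the complex birational morphisms $\psi_i\colon Y_i\to Y$ to be isomorphisms, and being also real, they descend to real isomorphisms of the corresponding real varieties.

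It then remains to package the data into a single isomorphism $\psi$ and a single function $\tilde f_1$. Setting $\psi=\psi_2^{-1}\circ\psi_1\colon Z_1\to Z_2$ and $\tilde f_1=f_1/\psi^*f_2$, a direct computation finishes the proof. Applying $\psi_1^*(\psi_2^{-1})^*=\psi^*$ to the relation $\psi_2^*\mathcal{D}=\mathcal{D}_2+\{1\}\otimes\mathrm{div}(f_2)$ and combining with $\psi_1^*\mathcal{D}=\mathcal{D}_1+\{1\}\otimes\mathrm{div}(f_1)$ yields
\[\psi^*(\mathcal{D}_2)=\psi_1^*\mathcal{D}-\{1\}\otimes\mathrm{div}(\psi^*f_2)=\mathcal{D}_1+\{1\}\otimes\mathrm{div}(\tilde f_1).\]
For the function, using $\psi_1=\psi_2\circ\psi$ together with the intertwining $\psi^*\circ\tau_2^*=\tau_1^*\circ\psi^*$ (valid because $\psi$ is real), one computes
\[\psi^*h_2=\psi_1^*h/\psi^*(f_2\cdot\tau_2^*f_2)=(f_1\,\tau_1^*f_1)\,h_1/(\psi^*f_2\cdot\tau_1^*\psi^*f_2)=(\tilde f_1\cdot\tau_1^*\tilde f_1)\,h_1,\]
which is precisely the desired relation, completing the proof.
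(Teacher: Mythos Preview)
Your proof is correct and follows the same route as the paper: invoke Theorem~\ref{prop:Equiv-Iso} and then use \cite[Theorem~8.8]{AH} together with minimality to upgrade the birational morphisms $\psi_i$ to isomorphisms, after which the composition $\psi=\psi_2^{-1}\circ\psi_1$ and $\tilde f_1=f_1/\psi^*f_2$ finish the job. The paper's one-line proof records only the key input from \cite[Theorem~8.8]{AH} and leaves the ``if'' direction and the combination step implicit; one small correction is that the characterization of minimality you cite sits in the paragraph after the (unnumbered) definition of minimal couples, not after Definition~\ref{def:AH-quotient}.
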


\begin{proof}
  This follows directly from \cite[Theorem~8.8]{AH} which asserts that
  $\psi_1$ and $\psi_2$ in Theorem~\ref{prop:Equiv-Iso} are both
  isomorphisms.
\end{proof}

\begin{cor} \label{cor:real forms}
  Let $(Z,(\DD,h))$ be a minimal couple on a real normal semiprojective variety $Z=(Y,\tau)$ determining a normal real affine variety $X=X(Z,(\DD,h))$ with an effective $\mathbb{S}^{1}$-action. Then every real form of the $\mathbb{S}^1$ variety $X$ is $\mathbb{S}^1$-equivariantly isomorphic to $X(Z',(\DD,h'))$ form some real form $Z'=(Y,\tau')$ of $Z$ and a $\tau'$-invariant rational function $h'$ on $Y$ making $(\DD,h')$ a phs-pair on $Z'$.
\end{cor}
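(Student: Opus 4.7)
The plan is to reduce the statement to Corollary~\ref{cor:existence-h} applied to the same proper segmental divisor $\DD$ but with respect to a new real structure on $Y$ coming from the real form. Write $X=(V,\sigma,\mu)$ and let $X'=(V',\sigma',\mu')$ be a real form, so by definition there exists a $\GM$-equivariant isomorphism $V'\simeq V$. Transporting $\sigma'$ via this isomorphism, we may identify $V'$ with $V=V(Y,\DD)$ endowed with a possibly different real structure, which we still denote by $\sigma'$, satisfying $\sigma'\circ\mu=\mu\circ(\rho\times\sigma')$.

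The crucial step is to show that the real structure $\overline{\sigma}'$ induced by $\sigma'$ on $V/\!/\GM=\spec(A_0)$ lifts to a real structure $\tau'$ on $Y$. This is where the minimality of $(Z,(\DD,h))$ is used: by the remark following the definition of a minimal couple, $Y$ coincides with the AH-quotient $Y(V)$ of $V$ in the sense of Definition~\ref{def:AH-quotient}. The AH-quotient is constructed intrinsically from the $\GM$-variety $V$ as the blow-up of $\spec(A_0)$ along the ideal $\langle A_d\cdot A_{-d}\rangle$, with $d$ as in \S\ref{affine-case}. By Lemma~\ref{lem:Charac-S1-actions}~$(ii)$ applied to $\sigma'$, we have $(\sigma')^{*}(A_m)=A_{-m}$ for all $m$, so the ideal $\langle A_d\cdot A_{-d}\rangle$ is $\overline{\sigma}'$-invariant. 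Consequently $\overline{\sigma}'$ lifts to a real structure $\tau'$ on $Y(V)=Y$, yielding a real form $Z'=(Y,\tau')$ of $Z$.

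Now the hypotheses of Corollary~\ref{cor:existence-h} are satisfied by the triple $(V,\sigma',\mu')$, the proper segmental divisor $\DD$ on $Y$, and the (identity) $\GM$-equivariant isomorphism $V'\simeq V(Y,\DD)$: the induced real structure $\overline{\sigma}'$ on the algebraic quotient lifts to $\tau'$ on $Y$ by the previous paragraph. The corollary therefore produces a $\tau'$-invariant rational function $h'$ on $Y$ such that $(\DD,h')$ is a phs-pair on $Z'=(Y,\tau')$ and $X(Z',(\DD,h'))$ is $\mathbb{S}^{1}$-equivariantly isomorphic to $X'$, which is exactly the desired conclusion.

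The only potentially subtle point is verifying that the AH-quotient construction is genuinely functorial with respect to arbitrary compatible real structures and not just the chosen one $\sigma$; but this is immediate once one observes that the center of the blow-up is described purely in terms of the graded pieces $A_m$, all of which are permuted by any such $\sigma'$ via $A_m\mapsto A_{-m}$. After this the remainder of the argument is just an invocation of Corollary~\ref{cor:existence-h}.
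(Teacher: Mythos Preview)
Your proof is correct and takes a somewhat more direct route than the paper's.

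The paper first applies Theorem~\ref{thm:real-AH-Main}(3) to the real form $X_1$ to obtain a presentation $X_1\simeq X(Z_1,(\DD_1,h_1))$ on the real AH-quotient $Z_1=(Y_1,\tau_1)$ of $X_1$, and then invokes the minimality of $(Y,\DD)$ together with \cite[Theorem~8.8]{AH} to produce a complex isomorphism $\psi\colon Y\to Y_1$ with $\DD=\psi^*\DD_1+\divi(f)$; the real structure $\tau'$ is then $\psi^*\tau_1$ and $h'$ is written down explicitly in terms of $f$, $h_1$ and $\psi$. Your approach instead exploits minimality at the outset: since $Y$ \emph{is} the AH-quotient $Y(V)$, and since the blow-up center $\langle A_d\cdot A_{-d}\rangle$ depends only on the grading and is therefore invariant under any real structure compatible with $\mu$ (Lemma~\ref{lem:Charac-S1-actions}(ii)), the transported $\sigma'$ lifts directly to a real structure $\tau'$ on $Y$ itself, and Corollary~\ref{cor:existence-h} finishes the job. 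This bypasses the detour through $Y_1$ and \cite[Theorem~8.8]{AH}, and in passing it spells out why the parenthetical ``which holds for instance if $(Y,\DD)$ is minimal'' in Corollary~\ref{cor:existence-h} is true, something the paper leaves to the reader. Conversely, the paper's argument has the advantage of producing an explicit formula for $h'$ in terms of the auxiliary data, which in your version remains hidden inside the invocation of Corollary~\ref{cor:existence-h}. One minor slip: in your third paragraph the triple should read $(V,\sigma',\mu)$ rather than $(V,\sigma',\mu')$, since after transporting along the $\GM$-equivariant isomorphism the action on $V$ is $\mu$.
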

\begin{proof}
  Recall that by definition $X=(V,\sigma,\mu)$ where
  $V=V(\mathcal{D},h')$ is endowed with the $\GM$-action $\mu$ given
  by the $\mathbb{Z}$-grading of its coordinate ring, and $\sigma$ is
  the real structure on $V$ constructed in the proof of Theorem
  \ref{thm:real-AH-Main} (1). On the other hand, by Theorem
  \ref{thm:real-AH-Main} (3), every real form
  $X_1=(V_1,\mu_1,\sigma_1)$ of $X$ is $\mathbb{S}^1$ equivariantly
  isomorphic to $X(Z_1,(\mathcal{D}_1,h_1))$ for a suitable phs-pair
  $(\DD_1,h_1)$ on its real AH-quotient $Z_1=(Y_1,\tau_1)$. Since by
  definition $V_1$ is $\GM$-equviariantly isomorphic to $V$ and the
  couple $(Y,\DD)$ is minimal, it follows \cite[Theorem 8.8]{AH} that
  there exist an isomorphism of complex varieties
  $\psi:Y\rightarrow Y_1$ such that $\DD=\psi^*\DD_1+\divi(f)$ for
  some rational function $f$ on $Y$. Letting $\tau'=\psi^*(\tau_1)$,
  the rational function
  $h'=f(\psi^{-1}\tau_{1}\psi(f))^{-1}\psi^{*}h_{1}$ on $Y$ is
  $\tau'$-invariant, and $(\DD,h')$ is phs pair on $Z'=(Y,\tau')$ such
  that $X_1$ is $\mathbb{S}^1$-equivariantly isomorphic to
  $X(Z',(\DD,h'))$.
\end{proof}

\begin{rem} A direct adaptation of \cite[Section~8]{AH} in our context
  provides a notion of \emph{morphism of phs-pairs} for which the
  appropriate extension of Theorem~\ref{prop:Equiv-Iso} yields that
  the assignment $(Z,(\DD,h))\mapsto X(Z,(\DD,h))$ is a faithful
  covariant functor from the category of phs-pairs on normal real 
  semiprojective varieties to the category of normal real affine
  varieties with effective $\mathbb{S}^1$-actions.
\end{rem}

\subsection{Real DPD presentations}\label{rem:seg-div-couple}

It is well-known that proper segmental divisors can be described by a simpler datum consisting
of a suitable pair of $\QQ$-divisors. Indeed, given a proper segmental divisor $\mathcal{D}$
on a normal complex algebraic variety $Y$, we let
$D_{+}=\mathcal{D}(1)$ and $D_{-}=\mathcal{D}(-1)$. The identity
    \begin{align}
    \label{eq:DPD}
    \mathcal{D}=\{1\}\otimes D_+ + [0,1]\otimes(-D_+ - D_-)  
  \end{align}
  implies that $\mathcal{D}$ is equivalently fully determined by a
  couple of big and semiample $\mathbb{Q}$-Cartier divisors $D_{+}$
  and $D_{-}$ on $Y$ satisfying $D_++D_-\leq 0$ (see $\S$
  \ref{Phs-div}).  Furthermore, if $(\mathcal{D},h)$ is a phs-pair for
  a given additional real structure $\tau$ on $Y$, then the identity
  $\tau^{*}\mathcal{D}=\flip{\mathcal{D}}+\{1\}\otimes\mathrm{div}(h)$
  is equivalent to $D_{-}=\tau^{*}D_{+}-\mathrm{div}(h)$.  Summing up,
  a phs-pair $(\mathcal{D},h)$ on a normal real algebraic variety
  $Z=(Y,\tau)$ is equivalently fully determined via \eqref{eq:DPD} by
  a pair $(D,h)$ satisfying $D+\tau^*D\leq \divi(h)$. The original
  data is recovered from \eqref{eq:DPD} by setting $D_+=D$ and
  $D_-=\tau^{*}D-\mathrm{div}(h)$.  By analogy with the terminology
  introduced by Flenner and Zaidenberg \cite{FZ03} for the description
  of normal complex affine surfaces with $\GM$-actions, we set the
  following definition:
\begin{defn}  
  A \emph{real DPD pair} on a normal real algebraic variety
  $Z=(Y,\tau)$ is a pair $(D,h)$ consisting of a big and semiample
  $\mathbb{Q}$-Cartier divisor $D$ and a $\tau$-invariant
  rational function $h$ on $Y$ satisfying $D+\tau^*D\leq \divi(h)$.
\end{defn}

Here, DPD stands for Dolgachev, Pinkham and Demazure, respectively who
where the first to describe split $\GM$-actions via 
$\QQ$-divisors on their quotients, see the references in \cite{FZ03}. The
following corollary is a straightforward reformulation of Theorem
\ref{thm:real-AH-Main} and Corollary~\ref{cor:AH-iso-same} in terms of
DPD-pairs:
\begin{cor}
\label{cor:DPD-General} A normal real affine variety $X$ with an effective $\mathbb{S}^{1}$-action is determined by the following
data:
\begin{enumerate}[(1)]
\item A real normal semiprojective variety $Z=(Y,\tau)$
  representing the real AH-quotient of $X$,

\item A pair $(D,h)$ consisting of a big and semiample
  $\mathbb{Q}$-Cartier divisor $D$ and a $\tau$-invariant rational
  function $h$ on $Y$ satisfying $D+\tau^*D\leq \divi(h)$.
\end{enumerate}

Furthermore, for a fixed $Z=(Y,\tau)$, two pairs $(D_1,h_1)$ and $(D_2,h_2)$ determine $\mathbb{S}^1$-equivariantly isomorphic affine varieties if and
only if there exists a real automorphism $\psi$ of $Z$ and a rational
function $f$ on $Y$ such that \[ \psi^*D_2=D_1+\mathrm{div}(f) \quad \textrm{and} \quad
\psi^*(h_2)=(f\cdot\tau^*f)\cdot h_1.\]
\end{cor}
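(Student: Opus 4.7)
The plan is to transfer Theorem~\ref{thm:real-AH-Main} and Corollary~\ref{cor:AH-iso-same} through the bijective correspondence already sketched in the preamble of \S\ref{rem:seg-div-couple} between phs-pairs and real DPD pairs on a fixed normal real semiprojective variety $Z=(Y,\tau)$.

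First I would make the bijection precise. Given a phs-pair $(\mathcal{D},h)$, set $D:=\mathcal{D}(1)$. Then $D$ is a big and semiample $\mathbb{Q}$-Cartier divisor, and evaluating the phs-identity $\tau^{*}\mathcal{D}=\flip{\mathcal{D}}+\{1\}\otimes\mathrm{div}(h)$ at $m=-1$ yields $\mathcal{D}(-1)=\tau^{*}D-\mathrm{div}(h)$; combined with the general inequality $\mathcal{D}(1)+\mathcal{D}(-1)\leq 0$, this gives $D+\tau^{*}D\leq \mathrm{div}(h)$, so $(D,h)$ is a real DPD pair. Conversely, starting from a real DPD pair $(D,h)$, setting $D_{-}:=\tau^{*}D-\mathrm{div}(h)$ produces divisors with $D+D_{-}\leq 0$, where $D_{-}$ is big and semiample because it is $\mathbb{Q}$-linearly equivalent to $\tau^{*}D$ and these properties are preserved by the scheme involution $\tau$. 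Formula \eqref{eq:DPD} then defines a proper segmental divisor $\mathcal{D}$, and the identity $D_{-}=\tau^{*}D-\mathrm{div}(h)$ evaluated back through \eqref{eq:DPD} yields the phs-identity. The two constructions are mutually inverse.

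Assertion (1) then follows immediately from Theorem~\ref{thm:real-AH-Main} (2)--(3): every $(D,h)$ produces $X(Z,(\mathcal{D},h))$, and conversely every normal real affine $\mathbb{S}^{1}$-variety arises in this way on its real AH-quotient. For assertion (2), evaluating the identity $\psi^{*}\mathcal{D}_{2}=\mathcal{D}_{1}+\{1\}\otimes\mathrm{div}(f)$ of Corollary~\ref{cor:AH-iso-same} at $m=1$ gives $\psi^{*}D_{2}=D_{1}+\mathrm{div}(f)$. For the reverse direction, assuming $\psi^{*}D_{2}=D_{1}+\mathrm{div}(f)$ and $\psi^{*}h_{2}=(f\cdot\tau^{*}f)\cdot h_{1}$, using $\psi\tau=\tau\psi$ and $D_{i,-}=\tau^{*}D_{i}-\mathrm{div}(h_{i})$ a straightforward computation yields $\psi^{*}D_{2,-}=D_{1,-}-\mathrm{div}(f)$; inserting both identities into \eqref{eq:DPD} recovers $\psi^{*}\mathcal{D}_{2}=\mathcal{D}_{1}+\{1\}\otimes\mathrm{div}(f)$, so the DPD criterion is equivalent to the phs-criterion of Corollary~\ref{cor:AH-iso-same}.

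The proof is essentially bookkeeping, so there is no substantial obstacle; the one point that deserves a sentence of care is the preservation of bigness and semiampleness under $\tau^{*}$, which holds because $\tau$ is an involution of $Y$ as a scheme (independent of its $\mathbb{C}$-antiregularity). All other verifications reduce to linearity of the evaluation maps $\mathrm{ev}_{m}$ and the definition of the flip $\flip{\mathcal{D}}$.
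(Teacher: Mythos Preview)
Your proposal is correct and follows exactly the paper's approach: the paper states that the corollary is a straightforward reformulation of Theorem~\ref{thm:real-AH-Main} and Corollary~\ref{cor:AH-iso-same} via the dictionary between phs-pairs and real DPD pairs set up in \S\ref{rem:seg-div-couple}, and you have simply spelled out that reformulation in detail. Your explicit check that the isomorphism criterion transfers (by evaluating at $m=\pm1$ and using $\psi\tau=\tau\psi$ together with $D_{i,-}=\tau^{*}D_i-\mathrm{div}(h_i)$) and your remark on the $\tau^{*}$-stability of bigness and semiampleness are exactly the verifications the paper leaves implicit.
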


\begin{rem} \label{rem:Galois-Module} %
  Given a real algebraic variety $Z=(Y,\tau)$,  the group of $K(Y)^{*}$ of invertible rational functions on $Y$ has
  the structure of Galois module under the action of $\tau$. In terms of this structure, the condition
  $\psi^*(h_2)=(f\cdot\tau^*f)\cdot h_1$ in Corollary ~\ref{cor:AH-iso-same} and Corollary~\ref{cor:DPD-General} means that $h_1$ and $\psi^*(h_2)$ have the same class in the Galois cohomology group 
\[H^{2}(\mathrm{Gal}(\mathbb{C}/\mathbb{R}),K(Y)^{*})=(K(Y)^{*})^{\tau^*}/\mathrm{Im}(\mathrm{id}\times\tau^{*}).\]
\end{rem}

\section{Low dimensional examples}
\label{section3}

In this section we consider real affine curves and surfaces with $\mathbb{S}^{1}$-actions. 
In the surface case, rephrasing Corollary \ref{cor:DPD-General}, we obtain in particular a real counterpart of
Flenner-Zaidenberg DPD-presentation of normal complex affine surfaces with $\mathbb{G}_{m,\mathbb{C}}$-actions \cite{FZ03}. 
We illustrate the methods to explicitly find phs-pairs and DPD-pairs
corresponding to given $\mathbb{S}^{1}$-actions on various on examples.

\subsection{Real affine curves with $\mathbb{S}^{1}$-actions }

Let us first explain how to re-derive the following classical
characterization of real affine curves with a effective
$\mathbb{S}^{1}$-actions:

\begin{prop}
  Up to equivariant isomorphism there exists precisely two normal real
  affine curves with an effective $\mathbb{S}^{1}$-action:
  \begin{enumerate}[(a)]
  \item The circle
    $\mathbb{S}^{1}=\mathrm{Spec}(\mathbb{R}[x,y,]/(x^{2}+y^{2}-1)$
    acting on itself by translations

  \item The curve
    $C=\mathrm{Spec}(\mathbb{R}[u,v]/(u^{2}+v^{2}+1))
    \subset\mathbb{A}_{\mathbb{R}}^{2}$ on which $\mathbb{S}^{1}$ acts
    by restriction of the representation
    $\rho_{0}:\mathbb{S}^{1}\rightarrow\mathrm{SO}_{2,\mathbb{R}}$
    defined in $($\ref{eq:Representation}$)$.

\end{enumerate}
\end{prop}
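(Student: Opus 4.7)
The plan is to apply the DPD classification of Corollary~\ref{cor:DPD-General} specialized to the curve case. Since the $\mathbb{S}^{1}$-action is effective and both $X$ and $\mathbb{S}^{1}$ are one-dimensional, the real AH-quotient $Z=(Y,\tau)$ has $\dim Y = 0$. Geometric integrality of $X$ forces irreducibility of the complexification $V$, and tracing through the formula $V = \spec\bigl(\bigoplus_m \Gamma(Y,\mathcal{O}_Y(\mathcal{D}(m)))\bigr)$ shows that a disconnected $Y$ produces a disconnected $V$; hence $Y$ is connected and equals $\spec(\CC)$, so $Z$ represents $\spec(\RR)$.

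On a $0$-dimensional variety the only Weil $\QQ$-divisor is $D = 0$, so a real DPD pair $(D,h)$ on $Z$ reduces to the choice of a nonzero $\tau$-invariant element $h \in \CC^{*}$, i.e.\ $h \in \RR^{*}$. By Corollary~\ref{cor:DPD-General}, two pairs $(0,h_1)$ and $(0,h_2)$ give $\mathbb{S}^{1}$-equivariantly isomorphic curves iff $h_2 = f\cdot\tau^{*}(f)\cdot h_1 = |f|^{2} h_1$ for some $f \in \CC^{*}$ (using that $\spec(\RR)$ has only the identity automorphism), so the classification reduces to the sign of $h$, with the two representatives $h = 1$ and $h = -1$.

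Finally, I would identify the two resulting real curves explicitly. The graded coordinate ring is $A = \bigoplus_{m \in \ZZ} \CC \cdot s^m = \CC[s, s^{-1}]$ with $\GM$ acting by $t \cdot s = ts$, and the real structure produced by Theorem~\ref{thm:real-AH-Main} satisfies $\sigma^{*}(s) = h\, s^{-1}$. For $h=1$ the $\sigma$-invariants $x = (s+s^{-1})/2$ and $y = (s-s^{-1})/(2i)$ satisfy $x^2 + y^2 = 1$, recovering $\mathbb{S}^{1}$ acting on itself by translation. For $h=-1$ the $\sigma$-invariants $u = (s-s^{-1})/2$ and $v = (s+s^{-1})/(2i)$ satisfy $u^2+v^2 = -1$, recovering the curve $C$; pushing the action $t \cdot s = ts$ through this change of variables identifies the $\mathbb{S}^{1}$-action on $C$ with the restriction of $\rho_0$.

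The only mild obstacle is checking that $\{x,y\}$ (resp.\ $\{u,v\}$) generate the full invariant subring and not a proper subalgebra. I would settle this by a base-change argument: the substitution $s = x+iy$ (resp.\ $s = u+iv$) induces an isomorphism $\RR[x,y]/(x^2+y^2-1) \otimes_{\RR} \CC \cong \CC[s,s^{-1}]$ (resp.\ $\RR[u,v]/(u^2+v^2+1) \otimes_{\RR} \CC \cong \CC[s,s^{-1}]$), so by faithful flatness of $\CC/\RR$ the inclusion into $A^{\sigma^{*}}$ is already an equality over $\RR$.
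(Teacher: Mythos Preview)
Your argument is correct and follows essentially the same route as the paper: reduce to a phs/DPD pair on $\spec(\RR)$, classify the rational function $h\in\RR^{*}$ up to the equivalence $h\sim |f|^{2}h$, and then compute the $\sigma$-invariants explicitly in the two cases $h=\pm 1$. The only cosmetic difference is that the paper begins by observing directly that $V$ must be $\mathbb{A}^{1}_{*}=\spec(\CC[z^{\pm 1}])$ (the unique normal complex affine curve with an effective hyperbolic $\GM$-action), whereas you reach the same conclusion by first arguing $\dim Y=0$ and then connectedness; your added base-change check that the listed invariants generate the full fixed subring is a nice point the paper leaves implicit.
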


\begin{proof}
Since the complex punctured affine line $\mathbb{A}_{*}^{1}=\mathrm{Spec}(\mathbb{C}[z^{\pm1}])$
is the only normal complex affine curve admitting an effective hyperbolic $\GM$-action, 
namely the one $\mu$ by translations $t\cdot z=tz$, a normal real affine curve $X$ endowed with an effective
$\mathbb{S}^{1}$-action is represented by a triple $(V=\mathbb{A}_{*}^{1},\sigma, \mu)$. Its real AH-quotient is thus isomorphic
to $\mathrm{Spec}(\mathbb{C})$, endowed with the complex conjugation, an a phs-pair $(\mathcal{D},h)$ on it consists of the trivial divisor and a non-zero real number $h\in\mathbb{R}^{*}$. By Theorem~\ref{prop:Equiv-Iso} and Remark \ref{rem:Galois-Module},
two real numbers $h$ and $h'$ determine $\mathbb{S}^{1}$-equivariantly isomorphic
curves if and only if they have the same class in $H^{2}(\mathbb{Z}_{2},\mathbb{C}^{*})\simeq\mathbb{R}/\mathbb{R}_{+}$,
that is, if and only if they have the same sign. We thus have two
cases:

\begin{enumerate}[(a)]
\item  $h=1$. The corresponding real structure $\sigma$ on $\mathbb{A}_{*}^{1}$ as constructed in Theorem \ref{thm:real-AH-Main} is given 
by the composition of the involution $z\mapsto z^{-1}$ with the complex conjugation. The invariants are then generated by $x=\frac{1}{2}(z+z^{-1})$ and $y=\frac{1}{2i}(z-z^{-1})$, and we conclude that $X=\mathrm{Spec}(\mathbb{C}[z^{\pm1}]^{\sigma^*})\simeq \mathbb{S}^{1}$
on which $\mathbb{S}^{1}$ acts by translations. 

\item $h=-1$. The corresponding real structure $\sigma$ is given by the composition of the involution $z\mapsto -z^{-1}$ with the complex conjugation
The invariants are generated by $u=\frac{1}{2}(z-z^{-1})$ and $v=\frac{1}{2i}(z+z^{-1})$, and the corresponding real affine curve $X=\mathrm{Spec}(\mathbb{C}[z^{\pm1}]^{\sigma^*})$ is isomorphic to $C$ with the announced $\mathbb{S}^{1}$-action.
\end{enumerate}
\end{proof}

\subsection{Real DPD-presentation of affine surfaces with $\mathbb{S}^{1}$-actions}

Given a normal real affine surface endowed with an effective
$\mathbb{S}^{1}$-action $X=(V,\sigma,\mu)$, the AH-quotient $Y(V)$ of
$V$ coincides with its algebraic quotient $Y_{0}(V)=V/\!/\GM$, which
is a normal, hence smooth complex affine curve.  The pair
$(Y_0(V),\overline{\sigma})$ is thus a smooth real affine
curve. Corollary \ref{cor:DPD-General} can be rephrased in this
case in the form of the following real counterpart of
Flenner-Zaidenberg DPD-presentation of normal complex affine surfaces
with $\mathbb{G}_{m,\mathbb{C}}$-actions \cite{FZ03}:

\begin{prop} \label{inter-kevin}
  A normal real affine surface $X$ with an effective
  $\mathbb{S}^{1}$-action is determined by a smooth real affine curve
  $C=(Y,\tau)$ and a pair $(D,h)$ consisting of a Weil
  $\mathbb{Q}$-divisor $D$ and a $\tau$-invariant rational function
  $h$ on $Y$ such that $D+\tau^*D\leq \divi(h)$.
\end{prop}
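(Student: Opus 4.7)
The plan is to derive Proposition~\ref{inter-kevin} as a straightforward specialization of Corollary~\ref{cor:DPD-General} to dimension two, verifying along the way that the real AH-quotient is automatically a smooth affine curve and that all conditions on $D$ other than $D+\tau^{*}D\leq\divi(h)$ hold for free.

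First I would analyze the real AH-quotient in this dimension. Writing $X=(V,\sigma,\mu)$ with $V=\spec(A)$, the algebraic quotient $Y_{0}(V)=V/\!/\GM=\spec(A_{0})$ is a normal affine variety of dimension $\dim X-1=1$, and normality in dimension one forces smoothness. By Lemma~\ref{lem:Charac-S1-actions}~(i) the spaces $A_{d}$ and $A_{-d}$ are both nonzero, so the center $W\subset Y_{0}(V)$ defined by $\langle A_{d}\cdot A_{-d}\rangle$ is a proper, hence zero-dimensional, closed subscheme of a smooth curve. Any such subscheme is a Cartier divisor on $Y_{0}(V)$, so the blow-up $\pi\colon Y(V)\to Y_{0}(V)$ is an isomorphism and the real AH-quotient is already a smooth real affine curve $C=(Y(V),\tau)$.

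Next I would check that the extra conditions on the divisor $D$ in Corollary~\ref{cor:DPD-General} become vacuous on a smooth affine curve. Since $Y$ is smooth, Weil and Cartier divisors coincide, so the $\mathbb{Q}$-Cartier condition is automatic. Since $Y$ is affine, Serre vanishing gives $H^{1}(Y,\OO_{Y}(nD-p))=0$ for every closed point $p\in Y$, so the restriction map $H^{0}(Y,\OO_{Y}(nD))\to\OO_{Y}(nD)\otimes k(p)$ is surjective and $\OO_{Y}(nD)$ is globally generated, proving semiampleness. Taking any nonzero global section $s$ of $\OO_{Y}(nD)$ yields an effective representative $E=\divi(s)+nD\in|nD|$ whose complement is a principal open subset of the affine curve $Y$, hence affine, proving that $D$ is big.

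With these reductions the proposition is a direct restatement of Corollary~\ref{cor:DPD-General}. No serious obstacle is anticipated: the only genuinely geometric input is the collapse $Y(V)=Y_{0}(V)$ coming from blowing up a zero-dimensional center on a smooth curve, and the automaticity of bigness and semiampleness on affine curves is a standard consequence of Serre vanishing.
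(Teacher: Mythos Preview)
Your proof is correct and follows exactly the paper's approach: the paper simply observes that for a surface the AH-quotient $Y(V)$ coincides with the algebraic quotient $Y_{0}(V)=V/\!/\GM$, which is a normal (hence smooth) affine curve, and then invokes Corollary~\ref{cor:DPD-General}. One small imprecision: the complement $Y\setminus E$ need not be a \emph{principal} open of $Y$ (the class of $E$ in $\mathrm{Pic}(Y)$ can be non-torsion on a smooth affine curve), but it is still affine because every open subscheme of an affine curve is affine, so your bigness conclusion stands.
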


\begin{example} \label{ex:standard-conic-bundle} %

  Given a non-constant polynomial $P\in \mathbb{R}[w]$, we let $X(P)$
  be the normal real affine surface in  $\mathbb{A}^2_{\mathbb{R}}\times
  \mathbb{A}^1_{\mathbb{R}}=\mathrm{Spec}(\mathbb{R}[x,y][w])$ defined
  by the equation $x^2+y^2-P(w)=0$. The action of $\mathbb{S}^{1}$ on
  $\mathbb{A}^2_{\mathbb{R}}\times \mathbb{A}^1_{\mathbb{R}}$ defined
  by the direct sum of the representation
  $\rho_{0}:\mathbb{S}^{1}\rightarrow\mathrm{SO}_{2,\mathbb{R}}$ of
  \eqref{eq:Representation} on the first factor with the trivial
  representation on the second factor restricts to an effective
  $\mathbb{S}^{1}$-action on $X(P)$. We will show that a
  DPD-presentation for $X(P)$ is $(D,h)=(0,P(w))$ on the curve
  $C=\mathrm{Spec}(\mathbb{R}[w])=(\mathrm{Spec}(\mathbb{C}[w]),\tau)$,
  where $\tau$ is the complex conjugation.

  Indeed, by making the complex coordinate change $(u,v)=(x+iy,x-iy)$,
  we see that $X(P)$ endowed with its $\mathbb{S}^{1}$-action is
  represented by the triple $(V(P),\sigma,\mu)$ where $V(P)$ is the
  normal complex surface with equation $uv-P(w)=0$ in
  $\mathbb{A}^3_{\mathbb{C}}$, $\sigma$ is the real structure defined
  as the composition of the involution $(u,v)\mapsto (v,u)$ with the
  complex conjugation and $\mu$ is the effective $\GM$-action induced
  by the linear action $t\cdot(u,v,w)=(t^{-1}u,tv,w)$ on
  $\mathbb{A}^3_{\mathbb{C}}$. The quotient $V(P)/\!/\GM$ is
  isomorphic to $\mathrm{Spec}(\mathbb{C}[w])$ on which $\sigma$
  induces the complex conjugation $\tau$. Choosing $v$ as a
  semi-invariant function of weight $1$ on $V(P)$ as in the proof of Theorem
  \ref{thm:real-AH-Main}, we deduce from the identification:
  \[\Gamma(V(P),\mathcal{O}_{V(P)})\simeq \bigoplus_{n<0}\mathbb{C}[w]
    \cdot P(w)^{-n} v^n \oplus \mathbb{C}[w]\oplus \bigoplus_{n>0}
    \mathbb{C}[w] \cdot v^n \subset \mathbb{C}(w)(v) \] %
  that $V(P)$ is $\GM$-equivariantly isomorphic to
  $V(\mathrm{Spec}(\mathbb{C}[w]),\DD)$, where $\DD$ is determined by
  $\DD(n)=\operatorname{div}(1)=0$ and
  $\DD(-n)=\operatorname{div}(P(w)^{-n})$, for all $n>0$. We obtain from
  \eqref{eq:DPD} that
  \[\DD=\sum [0,p_i]\otimes \{a_i\} +\sum [0,q_i]\otimes
    (\{b_i\}+\{\overline{b}_i\})\,,\] %
  where $a_i$, $b_i$ and $\overline{b}_i$ are the real and complex
  roots of $P$ respectively, and $p_i,q_i$ are their respective multiplicities. Furthermore, $h=v\sigma^*v=vu=P(w)$,
  and so, the DPD-presentation of $X(P)$ is $(\DD(1),P(w))=(0,P(w))$ as claimed.
\end{example}

Note that in Example \ref{ex:standard-conic-bundle} above, the special
case where $P(w)=w$ corresponds to a surface $X(P)$ equivariantly
isomorphic to the affine plane $\mathbb{A}_{\mathbb{R}}^{2}$ endowed
with the effective $\mathbb{S}^{1}$-action defined by the
representation
$\rho_{0}:\mathbb{S}^{1}\rightarrow\mathrm{SO}_{2,\mathbb{R}}$. The
following is a counter-part for $\mathbb{S}^1$-actions of Gutwirth's
linearisation theorem \cite{Gu62} for $\mathbb{G}_m$-actions on the
plane:

\begin{prop} %
  Every effective $\mathbb{S}^{1}$-action on
  $\mathbb{A}_{\mathbb{R}}^{2}$ is conjugate by an automorphism of
  $\mathbb{A}_{\mathbb{R}}^{2}$ to that defined by the representation
  $\rho_{0}:\mathbb{S}^{1}\rightarrow\mathrm{SO}_{2,\mathbb{R}}$.
\end{prop}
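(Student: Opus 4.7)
The plan is to apply the DPD-presentation of Proposition~\ref{inter-kevin} and then normalize the resulting DPD pair using Corollary~\ref{cor:DPD-General}, reducing any given action to the standard pair $(0,w)$ on $\AF^1_\RR$.

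First, I would identify the real AH-quotient. The complexified $\GM$-action on $\AF^2_\CC$ is hyperbolic by Lemma~\ref{lem:Charac-S1-actions}, and by Gutwirth's linearization theorem \cite{Gu62} it is conjugate to a linear action with coprime weights $(p,-q)$, so the invariant ring $\CC[x,y]^{\GM}$ is a polynomial ring in one variable and the AH-quotient is isomorphic to $\AF^1_\CC$. Since $\Aut_\CC(\AF^1_\CC)=\CC^*\ltimes\CC$ has trivial first Galois cohomology (by Hilbert~90 applied to each factor of the semidirect product), $\AF^1_\CC$ admits only one real form; therefore the real AH-quotient is $C\simeq\AF^1_\RR$ with $\tau$ equal to the standard complex conjugation.

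Second, I would normalize $D$ to zero. Since $\mathrm{Pic}(\AF^1_\CC)=0$, the $\QQ$-divisor $D$ is principal, say $D=\divi(g)$ with $g\in\CC(w)^*$. Taking $\psi=\mathrm{id}$ and $f=g^{-1}$ in Corollary~\ref{cor:DPD-General} shows that $(D,h)$ is equivalent to $(0,h/(g\cdot\tau^*g))$. We may therefore assume $D=0$; the condition $D+\tau^*D\leq\divi(h)$ then forces $h\in\CC[w]$, and its $\tau$-invariance forces $h\in\RR[w]\setminus\{0\}$.

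Third, by Example~\ref{ex:standard-conic-bundle} the variety $X(\AF^1_\RR,(0,h))$ identifies with the real hypersurface $\{x^2+y^2=h(w)\}\subset\AF^3_\RR$, whose complexification is the affine surface $\{uv=h(w)\}\subset\AF^3_\CC$. The assumption $X\simeq\AF^2_\RR$ forces this complex surface to be isomorphic to $\AF^2_\CC$, which in turn forces $h$ to be linear: a constant $h\neq 0$ yields $\{uv=h\}\simeq\GM\times\AF^1_\CC$, which has non-constant units, and for $\deg h\geq 2$ the surface $\{uv=h(w)\}$ is a Danielewski surface whose Makar--Limanov invariant equals $\CC[w]$, whereas $\mathrm{ML}(\AF^2_\CC)=\CC$. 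Writing $h(w)=aw+b$ with $a\in\RR^*,b\in\RR$, the real automorphism $\psi:w\mapsto aw+b$ of $\AF^1_\RR$ together with $f=1$ yields an equivalence in Corollary~\ref{cor:DPD-General} between $(0,aw+b)$ and $(0,w)$. Hence $X$ is $\mathbb{S}^1$-equivariantly isomorphic to $X(\AF^1_\RR,(0,w))$, which by Example~\ref{ex:standard-conic-bundle} with $P(w)=w$ is $\AF^2_\RR$ endowed with the $\mathbb{S}^1$-action induced by $\rho_0$.

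The main obstacle is the third step: the Danielewski surfaces $\{uv=h(w)\}$ of degree $\geq 2$ must be ruled out as possible complex underlying varieties, and the cleanest way to do so is via the Makar--Limanov invariant; a divisor class group argument, noting that $\mathrm{Cl}(\{uv=h(w)\})\simeq\ZZ^{r-1}$ where $r$ is the number of distinct roots of $h$, provides an alternative.
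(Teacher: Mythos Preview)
Your second step contains a genuine gap. You assert that since $\mathrm{Pic}(\AF^1_\CC)=0$ the $\QQ$-divisor $D$ is principal, but vanishing of the Picard group only makes \emph{integral} divisors principal; a $\QQ$-divisor such as $\tfrac12\{0\}$ is never of the form $\divi(g)$ for $g\in\CC(w)^*$. This is not a hypothetical concern here: after Gutwirth linearisation the $\GM$-action on $\AF^2_\CC$ has coprime weights $(-p,q)$, and choosing a semi-invariant $s=u^av^b$ of weight $1$ with $-ap+bq=1$ one finds the segmental divisor $\DD'=[-a/q,\,b/p]\otimes\{0\}$ on $\AF^1_\CC$, so that $D=\DD'(1)=-\tfrac{a}{q}\{0\}$ has a genuinely non-integral coefficient whenever $q>1$. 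The paper's proof addresses exactly this point: comparing endpoints in the compatibility $\overline\sigma^{*}\DD'=\flip{\DD'}+\{1\}\otimes\divi(h')$, where $\divi(h')$ is an \emph{integral} divisor, forces $(-ap+bq)/pq=1/pq$ to be an integer, hence $p=q=1$. Only after this step does $D$ become integral and your normalisation to $D=0$ become legitimate. Without such an argument (an alternative would be to analyse the weights of the induced real structure on the tangent space at the unique $\GM$-fixed point, which must interchange the weight $-p$ and weight $q$ eigenspaces and hence forces $p=q$), the reduction to Example~\ref{ex:standard-conic-bundle} is unjustified.

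There is also a slip in the third step: for $\deg h\geq 1$ the Makar--Limanov invariant of $\{uv=h(w)\}$ equals $\CC$, not $\CC[w]$, since the two obvious locally nilpotent derivations have kernels $\CC[u]$ and $\CC[v]$, and $u,v$ are algebraically independent. Thus the ML invariant does not separate these surfaces from $\AF^2_\CC$. Your alternative via the divisor class group does work when $h$ has at least two distinct roots, and the remaining case $h(w)=c(w-\alpha)^n$ with $n\geq 2$ is excluded because the surface then carries an $A_{n-1}$-singularity; together these salvage step~3. But the gap in step~2 is the essential one, and it is precisely where the distinction between $\mathbb{S}^1$ and the split torus enters the argument.
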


\begin{proof}
  Indeed, let $(V=\mathbb{A}^2_{\mathbb{C}},\sigma_{\mathbb{A}^2_{\mathbb{R}}},\mu)$
  be a triple representing the given $\mathbb{S}^{1}$-action on
  $X=\mathbb{A}^2_{\mathbb{R}}$. Since by virtue of Lemma
  \ref{lem:Charac-S1-actions}, the $\GM$-action $\mu$ is hyperbolic,
it follows from Gutwirth's theorem \cite{Gu62} that $\mu$ is conjugate
  by an automorphism $\varphi$ of $\mathbb{A}_{\mathbb{C}}^{2}$ to a
  linear action $\nu$ of the form $t\cdot(u,v)=(t^{-p}u,t^{q}v)$ for some relatively prime
  positive integers $p$ and $q$. It follows that $X$ endowed with its $\mathbb{S}^1$-action is also 
  represented by the triple $(V,\sigma,\nu)$ where $\sigma=\varphi^*\sigma_{\mathbb{A}^2_{\mathbb{R}}}=\varphi^{-1} \sigma_{\mathbb{A}^2_{\mathbb{R}}}\varphi$ is the pull-back of $\sigma_{\mathbb{A}^2_{\mathbb{R}}}$ by $\varphi$. 
  
The algebraic quotient $V/\!/\GM$ is isomorphic to $\mathrm{Spec}(\mathbb{C}[z])\simeq \mathbb{A}^1_{\mathbb{C}}$, where $z=u^qv^p$. Letting $a$ and $b$ be positive integers such that $-ap+bq=1$, $s=u^av^b$ is a semi-invariant regular function of weight $1$ on $V$ which determines a $\GM$-equivariant isomorphism between $V$ and $V(\mathbb{A}^1_{\mathbb{C}},\DD')$ for the segmental divisor $\DD'=[-a/q,b/p]\otimes\{0\}$. Since $C=(V/\!/\GM,\overline{\sigma})$  is a real form of $\mathbb{A}^1_{\mathbb{R}}$, hence is isomorphic to the trivial one, there exists an automorphism $\psi:z\mapsto \alpha z +\beta $ of $V/\!/\GM$, where $\alpha\in \mathbb{C}^*$ and $\beta \in \mathbb{C}$, such that $\overline{\sigma}=\psi^*\sigma_{\mathbb{A}^1_{\mathbb{R}}}$. So $\overline{\sigma}$ is the composition of the automorphism $z\mapsto \overline{\alpha}\alpha^{-1}z+\alpha^{-1}(\overline{\beta}-\beta)$ of $V/\!/\GM$ with the complex conjugation. The condition $\overline{\sigma}^*\DD'=\flip{D'}+1\otimes \divi(h')$ for some $\overline{\sigma}$-invariant rational function $h' \in\mathbb{C}(z)$ then reads \[[-a/q,b/p]\otimes \{\alpha^{-1}(\overline{\beta}-\beta)/\}=[-b/p,a/q]\otimes \{0\}+1\otimes \divi(h').\] Since $\divi(h')$ is an integral Weil divisor, it follows that $(-ap+bq)/pq=1/pq$ is an integer. Thus $p=q=1$ and we can now assume further from the very beginning that $a=0$ and $b=1$, so that $s=v$ and $\DD'=[0,1]\otimes  \{ 0 \}$. The condition $\overline{\sigma}^*\DD'=\flip{D'}+1\otimes \divi(h')$ then implies that $\divi(h')=\{0\}=\{\alpha^{-1}(\overline{\beta}-\beta)\}$, hence that $\beta\in \mathbb{R}$ and $h'=\gamma z$ for some $\gamma$ in $\mathbb{C}^*$. The fact $h'$ is $\overline{\sigma}$-invariant implies in turn that $\gamma=c\alpha$ for some $c\in \mathbb{R}^*$. The phs-pair $(\DD',h')$ is thus the pull-back of the pair $(\DD'',h'')=([0,1]\otimes  \{ \beta \},cz)$ by the real isomorphism $\psi: (\mathbb{A}^1_{\mathbb{C}},\overline{\sigma})\stackrel{\simeq}{\rightarrow} (\mathbb{A}^1_{\mathbb{C}},\sigma_{\mathbb{A}^1_{\mathbb{R}}})$. Since $c,\beta \in \mathbb{R}$, we see that $(\DD'',h'')$ is in turn the pull-back of the phs-pair $(\DD,h)=([0,1]\otimes  \{ 0 \},z)$ by the real automorphism $\varphi: z\mapsto c(z-\beta)$ of $(\mathbb{A}^1_{\mathbb{C}},\sigma_{\mathbb{A}^1_{\mathbb{R}}})$. Summing up, we conclude that $X$ is  $\mathbb{S}^1$-equivariantly isomorphic to  $X(\mathrm{Spec}(\mathbb{R}[z]),([0,1]\otimes \{0\},z))$ hence to $\mathbb{A}_{\mathbb{R}}^{2}$ endowed with the action defined by representation $\rho_{0}$.
\end{proof}

\begin{example}[An algebraic model of the open Moebius band]
  Let $X'\simeq\mathbb{A}^1_{\mathbb{R}}\times \mathbb{S}^1$ be the real
  surface endowed with the free $\mathbb{S}^1$-action by translations
  on the second factor represented by the triple
  \[(V'=\mathbb{A}^1_{\mathbb{C}}\times
    \GM=\mathrm{Spec}(\mathbb{C}[w] [z^{\pm{1}}]),\sigma',\mu'),\]
  where $\sigma'$ is the composition of the involution
  $(w,z) \mapsto (w,z^{-1})$ with the complex conjugation and $\mu'$
  is the $\GM$-action by translations on the second factor.  A
  corresponding real DPD-presentation is given by the DPD-pair
  $(D,h)=(\{0\},1)$ on the real affine curve
  $C'=(\mathrm{Spec}(\mathbb{C}[w]),\tau)$ where $\tau$ is the complex
  conjugation.

  The involution $(w,z)\mapsto (-w,-z)$ of $V'$ is both $\sigma'$ and
  $\GM$-equivariant. The quotient of $V'$ by this involution thus
  inherits a real structure and a $\GM$-action which correspond to a
  smooth real affine surface $X$ with an effective
  $\mathbb{S}^1$-action. Explicitly, letting $x=wz^{-1}$ and $y=z^2$,
  $X$ is represented by the triple $(V,\sigma, \mu)$ where
  $V\simeq\mathbb{A}^1_{\mathbb{C}}\times
  \GM=\mathrm{Spec}(\mathbb{C}[x][y^{\pm{1}}])$, $\sigma$ is the
  composition of involution $(x,y)\mapsto (xy,y^{-1})$ with the
  complex conjugation, and $\mu$ is the effective $\GM$-action induced
  by the linear action with weights $(-1,2)$. A phs-pair for $X$
  is
  \[(\DD,h)=\left(\tfrac{1}{2}\otimes \{0\},x^2y\right) \mbox{ on the
      real affine curve
    }C=\left(\spec(\mathbb{C}[x^2y]),\tau\right)\simeq
    \mathbb{A}^1_{\mathbb{R}}\,,\] where $\tau$ denotes the complex
  conjugation. A real DPD-presentation of $X$ is given by the DPD-pair
  $(\DD(1),h)=\left(\tfrac{1}{2}\cdot \{0\},x^2y\right)$. 
  Furthermore, we have a commutative diagram of real
  morphisms:
  \[\xymatrix{ X'=(V',\sigma',\mu') \ar[rrrr]^{(w,z) \mapsto
        (x,y)=(wz^{-1},z^2)} \ar[d]_{\mathrm{p}_1} & & & &
      X=(V,\sigma,\mu) \ar[d]^{(x,y)\mapsto x^2y} \\ C'
      \ar[rrrr]^{w\mapsto x^2y=w^2} & & & & C.}\]

  The real locus $X'(\mathbb{R})$ of $X'$ endowed with its natural
  structure of differentiable manifold is diffeomorphic to
  $\mathbb{R}\times S^1$, on which $S^1$, identified with the set of
  complex numbers $\exp(i\theta)$ of norm $1$, acts by translations on
  the second factor. The real locus of $X$ is then diffeomorphic to
  the open Moebius band obtained as the quotient of
  $\mathbb{R}\times S^1$ by the involution
  $(w,\exp(i\theta))\mapsto (-w, \exp(-i\theta))$, endowed with the
  $S^1$-action induced by that on $\mathbb{R}\times S^1$.
\end{example}

\section{Higher dimensional examples}
\label{section4}

In this section, to continue to illustrate the methods to explicitly find
phs-pairs corresponding to given $\mathbb{S}^{1}$-actions, we present
two natural higher dimensional examples. We begin with a real form of
the action of the maximal torus $\mathbb{G}_{m,\mathbb{R}}$ of
$\mathrm{SL}_{2,\mathbb{R}}$ on $\mathrm{SL}_{2,\mathbb{R}}$ by
multiplication, whose algebraic quotient morphism turns out to provide
an algebraic model of the Hopf fibration $S^{3}\rightarrow S^{2}$.  We
then consider certain families of non-trivial forms of linear
$\mathbb{S}^{1}$-actions on $\mathbb{A}_{\mathbb{R}}^{4}$ constructed
by Moser-Jauslin \cite{MJ18}.

\subsection{An algebraic model of the Hopf fibration
  $S^{3}\rightarrow S^{2}$ }

Recall that the Hopf fibration $S^{3}\rightarrow S^{2}$ realizes the
real sphere $S^{3}$ as the total space of an $S^{1}$-torsor over the
real sphere $S^{2}$ in the category of differentiable real
manifolds. Namely, the circle $S^{1}$ identified with the set of
complex numbers $z=x+iy\in\mathbb{C}^{*}$ of norm $1$ acts by
component-wise multiplication on the real sphere
$S^{3}\subset\mathbb{R}^{4}=\mathbb{C}^{2}$ viewed as set of pairs of
complex numbers $(z_{1}=x_{1}+iy_{1},z_{2}=x_{2}+iy_{2})$ such that
$|z_{1}|^{2}+|z_{2}|^{2}=1$. Letting $S^{2}\subset\mathbb{R}^{3}$ be
the $2$-sphere with equation $x^{2}+y^{2}+z^{2}=1$, the quotient map
$S^{3}\longrightarrow S^{3}/S^{1}\simeq S^{2}$ is defined by
\[
  (z_{1},z_{2})=(x_{1},y_{1},x_{2},y_{2})\mapsto\left(x,y,z\right)=\left(\left|z_{1}\right|^{2}-\left|z_{2}\right|^{2},2\mathrm{Re}(z_{1}\overline{z}{}_{2}),2\mathrm{Im}(z_{1}\overline{z}_{2})\right)
\]
Putting
$\mathbb{S}^{3}=\mathrm{Spec}(\mathbb{R}[x_{1},y_{1},x_{2},y_{2}]/(x_{1}^{2}+y_{1}^{2}+x_{2}^{2}+y_{2}^{2}-1))$
an algebraic model of the action of $S^1$ on $S^3$ is given by the
restriction to $\mathbb{S}^{3}$ of the $\mathbb{S}^{1}$-action on
$\mathbb{A}_{\mathbb{R}}^{2}\times\mathbb{A}_{\mathbb{R}}^{2}=\mathrm{Spec}(\mathbb{R}[x_{1},y_{1}][x_{2},y_{2}])$
defined as the direct sum of two copies of the representation
$\rho_{0}:\mathbb{S}^{1}\rightarrow\mathrm{SO}_{2,\mathbb{R}}$ in
\eqref{eq:Representation}. An algebraic model of the quotient map is
given by the morphism of real algebraic varieties
\[
  p:\mathbb{S}^{3}\rightarrow\mathbb{S}^{2},\;(x_{1},y_{1},x_{2},y_{2})\mapsto(x_{1}^{2}+y_{1}^{2}-x_{2}^{2}-y_{2}^{2},2(x_{1}x_{2}+y_{1}y_{2}),2(x_{2}y_{1}-x_{1}y_{2})), \]
where
$\mathbb{S}^{2}=\mathrm{Spec}(\mathbb{R}[x,y,z]/(x^{2}+y^{2}+z^{2}-1))$. Recall
that the divisor class group of $\mathbb{S}^{2}$ is trivial whereas
the divisor class group of $\mathbb{S}^{2}_{\mathbb{C}}$ is isomorphic
to $\mathbb{Z}$, generated by the class of the Cartier divisor
$D=\{x+iy=1-z=0\}$.
\begin{prop}
  The real affine threefold $\mathbb{S}^{3}$ endowed with the
  so-defined $\mathbb{S}^{1}$-action is $\mathbb{S}^{1}$-equivariantly
  isomorphic to $X(\mathbb{S}^{2},(\{1\}\otimes D,1-z))$
\end{prop}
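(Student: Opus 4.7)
I would prove this by explicitly running the algorithm of Theorem~\ref{thm:real-AH-Main}(3) on the given $\mathbb{S}^{1}$-action. As in Example~\ref{ex:standard-conic-bundle}, set $u_k = x_k + iy_k$, $v_k = x_k - iy_k$ for $k=1,2$; this realizes $V = \mathbb{S}^3_\CC$ as the smooth affine quadric threefold $\{u_1 v_1 + u_2 v_2 = 1\} \subset \mathbb{A}^4_\CC$. The $\GM$-action arising from $\rho_0 \oplus \rho_0$ is linear with weights $(-1,+1,-1,+1)$ on $(u_1, v_1, u_2, v_2)$, and the real structure $\sigma$ acts on the coordinate ring by $u_k \leftrightarrow v_k$ followed by complex conjugation of scalars.

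Next, I would identify the invariant ring. Writing $\alpha = u_1 v_1$, $\beta = u_1 v_2$, $\gamma = u_2 v_1$, $\delta = u_2 v_2$, one finds $A_0 = \CC[\alpha,\beta,\gamma,\delta]/(\alpha + \delta - 1,\, \alpha\delta - \beta\gamma)$, and the defining formulas of $p$ become $x = \alpha - \delta$, $y = \beta+\gamma$, $z = i(\gamma-\beta)$. Under this identification $A_0$ coincides with the coordinate ring of $\mathbb{S}^2_\CC$, and the induced real structure on $Y_0 := V/\!/\GM$ fixes $x$, $y$, $z$, agreeing with the standard structure $\tau$ on $\mathbb{S}^2$. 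Since $u_1 v_1 + u_2 v_2 = 1$ lies in $A_1 \cdot A_{-1}$, this product equals $A_0$, the blow-up center defining $Y(V)$ is empty, and the real AH-quotient of $\mathbb{S}^3$ is $\mathbb{S}^2$.

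The core computation is the phs-pair itself. Choosing the weight-$1$ semi-invariant $s = v_1 + iv_2$ as in the proof of Theorem~\ref{thm:real-AH-Main}(3), one gets $\sigma^*(s) = u_1 - iu_2 \in A_{-1}$ and
\[ h = s \cdot \sigma^*(s) = (u_1 v_1 + u_2 v_2) + i(u_1 v_2 - u_2 v_1) = 1 - z, \]
matching the claim. By the DPD reformulation \eqref{eq:DPD}, the segmental divisor $\DD$ is determined by $D_\pm = \DD(\pm 1)$. A reduction using $u_1 v_1 = 1 - u_2 v_2$ yields $A_{-1} = A_0 u_1 + A_0 u_2$, and the expansions $u_1 s = \tfrac{1}{2}((x+iy) + (1-z))$ and $u_2 s = \tfrac{i}{2}((1-z) - (x+iy))$ give $s \cdot A_{-1} = (x+iy,\, 1-z) \cdot A_0 = I_D$, so $D_- = -D$. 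Combining with $A_1 \cdot A_{-1} = A_0$ yields $(s^{-1}A_1) \cdot I_D = A_0$, and since $I_D$ is invertible on the smooth surface $\mathbb{S}^2_\CC$, we deduce $s^{-1}A_1 = I_D^{-1} = \Gamma(\mathbb{S}^2_\CC, \mathcal{O}(D))$, hence $D_+ = D$. Since $D_+ + D_- = 0$, the interval term in \eqref{eq:DPD} collapses and $\DD = \{1\} \otimes D$.

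Theorem~\ref{thm:real-AH-Main}(3) applied to these data then delivers the desired $\mathbb{S}^1$-equivariant isomorphism $\mathbb{S}^3 \simeq X(\mathbb{S}^2,(\{1\} \otimes D, 1-z))$; as a consistency check, the required identity $\divi(1-z) = D + \tau^*D$ follows from the ruling decomposition $(x+iy)(x-iy) = (1-z)(1+z)$ on $\mathbb{S}^2_\CC$ together with $\tau^*D = \{x-iy = 1-z = 0\}$. The main technical obstacle is the identification $s \cdot A_{-1} = I_D$: it relies on making $A_{-1}$ explicit as an $A_0$-module via the reduction $u_1 v_1 = 1 - u_2 v_2$ and on recognizing the ideal generated by $u_1 s$ and $u_2 s$ as the defining ideal of $D$ after the complex substitution.
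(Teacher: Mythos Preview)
Your argument is correct and close in spirit to the paper's, but the execution differs in two places worth noting. The paper first realizes $V/\!/\GM$ as the auxiliary quadric $S=\{uv+z^{2}=1\}$ via $(u,v,z)=(2u_{1}v_{2},2u_{2}v_{1},2u_{1}v_{1}-1)$, identifies the quotient map $V\to S$ as a $\GM$-torsor whose class in $\mathrm{Pic}(S)$ is that of $D'=\{u=1-z=0\}$, reads off $\DD'=\{1\}\otimes D'$ from this torsor interpretation, checks the phs-pair identity $\overline{\sigma}^{*}D'=-D'+\divi(1-z)$ directly, and only at the end transports everything to $\mathbb{S}^{2}$ via the real isomorphism $(x,y,z)\mapsto(x+iy,x-iy,z)$. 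Your route instead lands on $\mathbb{S}^{2}_{\CC}$ immediately via $(x,y,z)=(\alpha-\delta,\beta+\gamma,i(\gamma-\beta))$, picks the specific semi-invariant $s=v_{1}+iv_{2}$, and recovers both $h=s\sigma^{*}(s)=1-z$ and $D_{\pm}=\pm D$ by the fractional-ideal computation $sA_{-1}=I_{D}$, $s^{-1}A_{1}=I_{D}^{-1}$. The torsor viewpoint is cleaner conceptually (it explains at once why $\DD$ is a single Cartier divisor with trivial interval part), while your explicit choice of $s$ has the advantage of producing $h$ by the very formula of Theorem~\ref{thm:real-AH-Main}(3) and avoiding the final change of coordinates.
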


\begin{proof}
  As a real threefold with $\mathbb{S}^1$-action, $\mathbb{S}^{3}$ can
  be equivalently represented by the smooth complex affine quadric
  $V=\left\{ u_{1}v_{1}+u_{2}v_{2}=1\right\}$ in
  $\mathbb{A}^4_{\mathbb{C}}$, equipped with the real structure
  $\sigma$ defined as the composition of the involution
  $(u_{1},u_{2},v_{1},v_{2})\mapsto(v_{1},v_{2},u_{1},u_{2})$ with the
  complex conjugation, and endowed with the $\GM$-action $\mu$ defined
  by
  $t\cdot(u_{1},v_{1},u_{2},v_{2})=(tu_{1},t^{-1}v_{1},tu_{2},t^{-1}v_{2})$. The
  algebraic quotient $V/\!/\GM$ is isomorphic to the smooth affine
  quadric $S=\left\{ uv+z^{2}=1\right\}$ in
  $\mathbb{A}_{\mathbb{C}}^{3}$ and the quotient morphism
\[V\rightarrow
  V/\!/\mathbb{G}_{m,\mathbb{C}}=S,\quad(u_{1},v_{1},u_{2},v_{2})\mapsto(u,v,w)=(2u_{1}v_{2},2u_{2}v_{1},2u_{1}v_{1}-1) \]
is a $\GM$-torsor whose class in
$H^{1}(S,\mathcal{O}_{S}^{*})\simeq\mathrm{Pic}(S)\simeq\mathbb{Z}$
coincides with that of the line bundle associated to the Cartier
divisor $D'=\{u=1-z=0\}$ on $S$. It follows that $S$ is the
AH-quotient of $V$ and that $\mathcal{D}'=\{1\}\otimes D'$ is a proper
segmental divisor on $S$ such that $V$ is equivariantly isomorphic to
$V(S,\mathcal{D'})$.

The real structure $\sigma$ descends on $S$ to the real structure
$\overline{\sigma}$ defined as the composition of the involution
$(u,v,w)\mapsto(v,u,w)$ with the complex conjugation. Since
$\overline{\sigma}^*D'=\{v=1-z=0\}=-D'+\divi(1-z)$ where $1-z$ is
$\overline{\sigma}$-invariant, we conclude that $\mathbb{S}^{3}$ is
equivariantly isomorphic to
$X((S,\overline{\sigma}),(\{1\}\otimes D',1-z))$. The assertion then
follows by noticing that the phs-pair $(\{1\}\otimes D, 1-z)$ is the
pull-back of $(\{1\}\otimes D',1-z)$ by the isomorphism of real
algebraic
surfaces
\[\varphi:(\mathbb{S}_{\mathbb{C}}^{2},\sigma_{\mathbb{S}^{2}})\stackrel{\simeq}{\longrightarrow}
  (S,\overline{\sigma}), \quad (x,y,z)\mapsto (u,v,z)=(x+iy,x-iy,z).\]
\end{proof}

More generally, recall that for every integer $p\geq 1$, the Lens
space $L(p,1)$ is the quotient of
$S^3=\{(z_1,z_2),\, |z_{1}|^{2}+|z_{2}|^{2}=1\}$ by the free action of
the group $\mathbb{Z}_p$ defined by
$(z_1,z_2)\mapsto (\zeta z_1,z_2)$, where $\zeta=\exp(2i\pi/p)$. This
action is equivariant with respect to the $S^1$-action on $S^3$ by
component-wise multiplication, and so, $L(p,1)$ inherits a effective
action of $S^1$. A similar argument as in the proof of the previous
proposition shows that an algebraic model of $L(p,1)$ endowed with
this $S^1$-action is given by the real affine threefold
$\mathbb{L}(p,1):=X(\mathbb{S}^{2},(\{1\}\otimes (pD),(1-z)^p))$.

\subsection{Linear and non-linearizable $\mathbb{S}^{1}$-actions on
  $\mathbb{A}_{\mathbb{R}}^{4}$ }

Let again $\rho_{0}:\mathbb{S}^{1}\rightarrow SO_{2,\mathbb{R}}$ be
the representation defined in \eqref{eq:Representation}.  For every
integer $r\geq 0$, the morphism
\[\nu_{2,r}:\mathbb{S}^{1}\times\mathbb{A}_{\mathbb{R}}^{4}\rightarrow\mathbb{A}_{\mathbb{R}}^{4},
  \quad
  (s,(u_1,v_1,u_2,v_2))\mapsto(\rho_{0}(s)^{2}\cdot(u_1,v_1),\rho_{0}(s)^{2r+1}\cdot(u_2,v_2))\]
defines an effective action of $\mathbb{S}^{1}$ on
$\mathbb{A}_{\mathbb{R}}^{4}=\mathrm{Spec}(\mathbb{R}[u_1,v_1][u_2,v_2])$.
With the notation of Lemma \ref{lem:S1-action-equiv-form}, the latter
is represented by the triple $(V,\sigma,\mu_{2,r})$, where
$V=\mathbb{A}_{\mathbb{C}}^{4}=\mathrm{Spec}(\mathbb{C}[a,b][x,y])$,
$\sigma$ is the real structure defined as the composition of the
involution $(a,b,x,y)\mapsto (b,a,y,x)$ with the complex conjugation,
and $\mu_{2,r}$ is the linear hyperbolic $\GM$-action weights
$(2,-2,2r+1,-2r-1)$.

For $r=1$, Freudenburg and Moser-Jauslin \cite{FMJ04} constructed an
$\mathbb{S}^1$-action $\nu'_{2,1}$ on $\mathbb{A}_{\mathbb{R}}^{4}$
which is a non-trivial form of $\nu_{2,1}$, hence in particular a
non-linearizable action. The construction was generalized later on by
Moser-Jauslin \cite{MJ18} for arbitrary $r\geq 2$ to yield infinite
families of pairwise non-conjugate non-linearizable
$\mathbb{S}^{1}$-actions on $\mathbb{A}_{\mathbb{R}}^{4}$. Our aim is
to give a complementary description of these actions in terms of
phs-pairs.

Given $r\geq 1$, we let $Q_{2,r}$ be the closed subvariety of
$\mathbb{A}_{\mathbb{C}}^{4}=\mathrm{Spec}(\mathbb{C}[u,v,z,w])$ with
equation $uv=z^{2r+1}w^2$ and we let $\pi:Y_{2,r}\rightarrow Q_{2,r}$
be the blow-up of $Q_{2,r}$ with center at the closed subscheme $W$
with defining ideal $(u,v,z^{2r+1},w^{2})$. We denote by $E$ be the
exceptional divisor of $\pi$, and by $D_{z,u}$, $D_{z,v}$, $D_{w,u}$
and $D_{w,v}$ the respective proper transforms in $Y_{2,r}$ of the
Weil divisors $\{z=u=0\}$, $\{z=v=0\}$, $\{w=u=0\}$ and $\{w=v=0\}$ on
$Q_{2,r}$. We let $\DD_{2,r}$ be the segmental divisor on $Y_{2,r}$
defined by
\[\mathcal{D}_{2,r}=r\otimes D_{z,v}+\{1\}\otimes D_{w,v}+[2r,2r+1]\otimes E.\]
The main result of \cite{MJ18} can now be reformulated as follows:

\begin{thm} Let $r\geq 1$ be a fixed integer. Then for every
  polynomial $P\in\mathbb{R}[z]$, there exists a real structure
  $\tau_P$ on $Y_{2,r}$ with the following properties:
  \begin{enumerate}[(1)]
  \item The rational function $h_P= z^{r}((1-zP^{2}(z))w+P^{n}(z)v)$
    on $Y_{2,r}$ is $\tau_P$-invariant,
  \item $(\mathcal{D}_{2,r},h_P)$ is phs-pair on
    $Z_{2,r,P}=(Y_{2,r},\tau_P)$,
  \item The affine fourfold $X(Z_{2,r,P},(\mathcal{D}_{2,r},h_P))$ is
    isomorphic to $\mathbb{A}^4_{\mathbb{R}}$.
  \end{enumerate}
  Furthermore, two fourfolds
  $X(Z_{2,r,{P_i}},(\mathcal{D}_{2,r},h_{P_i}))$, $i=1,2$, are
  $\mathbb{S}^1$-equivariantly isomorphic if and only if there exists
  $c\in \mathbb{R}^*$ such that $P_2(z)\equiv cP_1(c^2z)$ modulo
  $z^r$.
\end{thm}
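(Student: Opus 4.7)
The plan is to separate the argument into three phases: identification of the complex variety $V(Y_{2,r},\DD_{2,r})$ with $\mathbb{A}^{4}_{\CC}$ endowed with the linear hyperbolic action $\mu_{2,r}$; construction of the real structure $\tau_{P}$ and verification of the phs-pair identity; and identification of the resulting real form with $\mathbb{A}^{4}_{\RR}$, leveraging the explicit construction of \cite{MJ18}. Since \cite{MJ18} already produces all the non-linearizable $\mathbb{S}^{1}$-actions on $\mathbb{A}^{4}_{\RR}$ in an explicit form, the task of this proof is essentially to repackage that construction in the language of Theorem~\ref{thm:real-AH-Main} and then derive the classification from Corollary~\ref{cor:AH-iso-same}.

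For the complex side, I would compute the $\ZZ$-graded coordinate ring of $\mathbb{A}^{4}_{\CC}=\spec(\CC[a,b,x,y])$ for the weights $(2,-2,2r+1,-(2r+1))$. The invariant subring is the hypersurface $\CC[U,V,Z,W]/(ZW-U^{2r+1}V^{2})$ with $U=ab$, $V=xy$, $Z=a^{2r+1}y^{2}$, $W=b^{2r+1}x^{2}$, which under $(u,v,z,w)=(Z,W,U,V)$ is exactly the coordinate ring of $Q_{2,r}$. The AH-quotient in the sense of \cite{AH} is then the proj construction of $\bigoplus_{m\geq 0}A_{m}$, which coincides with the blow-up $Y_{2,r}$ of $Q_{2,r}$ along the ideal $(u,v,z^{2r+1},w^{2})$, and reading off the segmental divisor from the weight polytopes of $A_{\pm 1}$ reproduces $\DD_{2,r}$.

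For the real structure and the phs-pair condition, I would describe $\tau_{P}$ as the lift to $Y_{2,r}$ of an anti-regular involution of $Q_{2,r}$ obtained by composing the standard conjugation $(u,v,z,w)\mapsto(\overline{v},\overline{u},\overline{z},\overline{w})$ with the $\GM$-equivariant automorphism of $Q_{2,r}$ induced, via Moser-Jauslin's description, by $P$. Lifting through $\pi$ is automatic since the blown-up ideal is real-invariant. Verification of item $(1)$ is then a direct substitution into the formula for $h_{P}$. Item $(2)$ reduces to tracking the images of the prime divisors $D_{z,u},D_{z,v},D_{w,u},D_{w,v},E$: by construction $\tau_{P}$ swaps $D_{z,v}\leftrightarrow D_{z,u}$ and $D_{w,v}\leftrightarrow D_{w,u}$ and preserves $E$, so $\tau_{P}^{*}\DD_{2,r}$ differs from $\flip{\DD_{2,r}}$ by a principal divisor supported on $D_{z,u}+D_{z,v}$, $D_{w,u}+D_{w,v}$ and $E$, which a local computation matches with $\{1\}\otimes\divi(h_{P})$.

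The main obstacle is item $(3)$: Theorem~\ref{thm:real-AH-Main} only guarantees \emph{a priori} that $X(Z_{2,r,P},(\DD_{2,r},h_{P}))$ is \emph{some} real form of $\mathbb{A}^{4}_{\RR}$, and to identify it with the trivial form I would invoke \cite{MJ18} to exhibit four algebraically independent real generators of the ring of $\tau_{P}$-invariants of $\bigoplus_{m\in\ZZ}\Gamma(Y_{2,r},\OO_{Y_{2,r}}(\DD_{2,r}(m)))$. Once this identification is in place, the classification follows from Corollary~\ref{cor:AH-iso-same}: after observing that every couple $(Z_{2,r,P},(\DD_{2,r},h_{P}))$ is minimal, equivariant isomorphism between $X(Z_{2,r,P_{1}},(\DD_{2,r},h_{P_{1}}))$ and $X(Z_{2,r,P_{2}},(\DD_{2,r},h_{P_{2}}))$ corresponds to a real isomorphism $\psi:(Y_{2,r},\tau_{P_{1}})\to(Y_{2,r},\tau_{P_{2}})$ with $\psi^{*}\DD_{2,r}=\DD_{2,r}+\{1\}\otimes\divi(f)$ and $\psi^{*}h_{P_{2}}=(f\cdot\tau_{P_{1}}^{*}f)\cdot h_{P_{1}}$ for some $f$. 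The first condition forces $\psi$ to preserve $D_{z,v}, D_{w,v}, E$ with their coefficients, which pins $\psi$ down to the one-parameter family of rescalings induced by $z\mapsto c^{2}z$, $c\in\RR^{*}$. Translating the $h$-compatibility through the explicit formula for $h_{P}$, and using that $\tau_{P}$ depends on $P$ only through its truncation modulo $z^{r}$, yields the stated congruence $P_{2}(z)\equiv cP_{1}(c^{2}z)\pmod{z^{r}}$; the delicate bookkeeping is to verify that no auxiliary rational function $f$ produces further spurious identifications between the classes.
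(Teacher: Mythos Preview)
Your overall architecture is close to the paper's, but the paper organizes the argument differently in a way that avoids a genuine gap in your item~(2). The paper does \emph{not} define $\tau_P$ directly on $Y_{2,r}$ and then verify the phs-pair identity by tracking how $\tau_P$ moves the prime divisors $D_{z,u},D_{z,v},D_{w,u},D_{w,v},E$. Instead it works upstairs: it writes down the explicit matrix $M_P\in\mathrm{SL}_2(\CC[a,b])$ from \cite{MJ18}, defines the $\GM$-equivariant automorphism $\varphi_P$ of $V=\mathbb{A}^4_\CC$ and the real structure $\sigma_P=\varphi_P\circ\sigma$, and then simply invokes Theorem~\ref{thm:real-AH-Main}(3) and Remark~\ref{rem:semi-inv-w1}: for the semi-invariant $s=b^{r}x$ determined by the toric section $\gamma$, the function $h_P=s\cdot\sigma_P^{*}(s)$ automatically makes $(\DD_{2,r},h_P)$ a phs-pair, and a direct computation gives the stated formula. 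Your claimed swap ``$\tau_P: D_{w,v}\leftrightarrow D_{w,u}$'' is false for $P\neq 0$: since $\varphi_P$ acts nontrivially on $x,y$, the induced automorphism of $Q_{2,r}$ does not fix $w=xy$, and one checks that the image of $\{w=u=0\}$ is a genuinely different prime divisor (parametrised by $(z,s)$ via $u=z^{n}P^{2n}s$, $v=(1-zP^{2})^{2}s$, $w=-P^{n}(1-zP^{2})s$). So your local computation matching $\tau_P^{*}\DD_{2,r}-\flip{\DD_{2,r}}$ with $\{1\}\otimes\divi(h_P)$ cannot proceed as stated. A smaller issue: the AH-quotient is not $\proj\bigl(\bigoplus_{m\geq 0}A_m\bigr)$ but the blow-up of $Q_{2,r}$ along $\langle A_{2n}\cdot A_{-2n}\rangle=(u,v,z^{n},w^{2})$; the paper checks this explicitly and obtains $\DD_{2,r}$ via the toric downgrading of \cite[\S 11]{AH}.

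For the classification, the paper does not attempt to re-derive it from Corollary~\ref{cor:AH-iso-same}; it simply quotes the main theorem of \cite{MJ18}, which already gives the criterion $P_2(z)\equiv cP_1(c^{2}z)\pmod{z^{r}}$ for $\mathbb{S}^1$-equivariant isomorphism of the $(V,\sigma_{P_i},\mu_{2,r})$. Your proposed route via Corollary~\ref{cor:AH-iso-same} is more ambitious but the key step is not justified: the condition $\psi^{*}\DD_{2,r}=\DD_{2,r}+\{1\}\otimes\divi(f)$ only forces $\psi^{*}E=E$ (the unique divisor with a thick coefficient), so $\psi$ descends to an automorphism of $Q_{2,r}$, but the automorphism group of $Q_{2,r}$ is much larger than the one-parameter family of rescalings $z\mapsto c^{2}z$ (it contains, for instance, all the $\varphi_P$'s), and nothing in your sketch explains why the reality constraint $\tau_{P_2}\circ\psi=\psi\circ\tau_{P_1}$ together with the $h$-compatibility cuts this down to rescalings.
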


\begin{proof}
Letting $n=2r+1$, the matrix 
\[
M_{P}=\left(\begin{array}{cc}
1-abP^{2}(ab) & a^{n}P^{n}(ab)\\
-b^{n}P^{n}(ab) & \sum_{j=0}^{2r}(abP^{2}(ab))^{j}
\end{array}\right)\in\mathrm{SL}_{2}(\mathbb{C}[a,b])
\]
defines an automorphism $\varphi_{P}$ of $V$ over
$\mathrm{Spec}(\mathbb{C}[a,b])$ which is equivariant for the
$\GM$-action $\mu_{2,r}$. By \cite[Theorem 3.1 (i)]{MJ18}, the
composition $\sigma_{P}=\varphi_{P} \circ \sigma$ is real structure on
$V$ such that $(V,\sigma_P)$ is isomorphic to
$\mathbb{A}^4_{\mathbb{R}}$. Since $\varphi_{P}$ is $\GM$-equivariant,
it follows from Lemma \ref{lem:S1-action-equiv-form} that the triple
$(V,\sigma_P,\mu_{2,r})$ is a smooth real affine fourfold $X_P$
endowed with an effective $\mathbb{S}^1$-action which is a real form
of $X_0=(V,\sigma,\mu_{2,r})$. The main result in \cite{MJ18} asserts
that $X_{P_1}$ is $\mathbb{S}^1$-equivariantly isomorphic to $X_{P_2}$
if and only if there exists $c\in\mathbb{R}^{*}$ such that
$P_{2}(z)\equiv cP_{1}(c^{2}z)$ modulo $z^{r}$. So to complete the
proof, it suffices to show that the AH-quotient of $V$ is equal to
$Y_{2,r}$, that $V$ is $\GM$-equivariantly isomorphic to
$V(Y_{2,r},\DD_{2,r})$, and that $X_P$ is $\mathbb{S}^1$-equivariantly
isomorphic to $X(Z_{2,r,P},(\DD_{2,r},h_P))$ for the claimed rational
function $h_P$ on $Y_{2,r}$ endowed with the real structure $\tau_P$
induced by $\sigma_P$.

First, it is clear that $Q_{2,r}$ is the algebraic quotient of $V$,
the quotient morphism being given by
$(a,b,x,y)\mapsto(u,v,z,w)=(a^{n}y^{2},b^{n}x^{2},ab,xy)$. The fact
that $Y_{2,r}$ is the AH-quotient of $V$ then follows from the
observation that the minimal $d$ for which the graded sub-algebra of
$A=\mathbb{C}[a,b,x,y]$ consisting of semi-invariants of positive
(resp. negative) weights divisible by $d$ is generated in degree $1$,
is equal to $2n$, and that
$A_{2n}\cdot
A_{-2n}=\left(a^{n}y^{2},b^{n}x^{2},a^{n}b^{n},x^{2}y^{2}\right)=\left(u,v,z^{n},w^{2}\right)$
is precisely the defining ideal of the center of the blow-up
$\pi:Y_{2,r}\rightarrow Q_{2,r}$.

The fact that $V\simeq V(Y_{2,r},\DD_{2,r})$ can then be derived for
instance from the toric downgrading method described in \cite[Section
11]{AH}. In our case, $V$ endowed with the $\GM$-action $\mu_{2,r}$ is
the restriction to the sub-torus $\GM\hookrightarrow\GM^{4}$,
$t\mapsto(t^{2},t^{-2},t^{n},t^{-n})$ of the usual structure of toric
variety of $\mathbb{A}_{\mathbb{C}}^{4}$. This sub-torus corresponds
to the injection $F:\mathbb{Z}\rightarrow\mathbb{Z}^{4}$,
$1\mapsto(2,-2,n,-n)$ between the respective lattices of $1$-parameter
subgroups, and we have an exact sequence
$0\rightarrow\mathbb{Z}\stackrel{F}{\rightarrow}\mathbb{Z}^{4}\stackrel{G}{\rightarrow}\mathbb{Z}^{3}\rightarrow0$
where
\[
G:\mathbb{Z}^{4}\rightarrow\mathbb{Z}^{3} \quad \mbox{is given by the
  matrix} \quad \left(\begin{array}{cccc}
1 & 1 & 0 & 0\\
0 & 0 & 1 & 1\\
n & 0 & 0 & 2
\end{array}\right).
\]

Let $\Sigma'$ be the fan in $\mathbb{Z}^{4}\otimes_\ZZ\RR$ generated
by the cone $\mathrm{cone}(e_{1},e_{2},e_{3},e_{4})$ where $e_{i}$ are
the standard basis vectors. The coarsest fan $\Sigma$ in
$\mathbb{Z}^{3}\otimes_\ZZ\RR$ generated by the image by $P$ of
$\Sigma'$ is the simplicial fan generated by $f_{1}=(1,0,n)$,
$f_{2}=(1,0,0)$, $f_{3}=(0,1,0)$, $f_{4}=(0,1,2)$ and the additional
vector $f_{5}=(2,n,2n)=2f_{1}+nf_{3}=2f_{2}+nf_{4}$.  This fan
describes $Y_{2,r}$ as a toric threefold in which the invariant
divisors corresponding the rays generated by the $f_{i}$ are
respectively $D(f_{1})=D_{z,u}$, $D(f_{2})=D_{z,v}$,
$D(f_{3})=D_{w,v}$, $D(f_{4})=D_{w,u}$, while $D(f_{5})$ is the
exceptional divisor $E$ of $\pi:Y_{2,r}\rightarrow Q_{2,r}$. A direct
calculation now shows that $\DD_{2,r}$ is equal to proper segmental
divisor supported on the union of the $D(f_i)$ whose coefficients are
the images by the section
$\gamma=r\mathrm{pr}_{2}+\mathrm{pr}_{3}:\mathbb{Z}^{4}\rightarrow\mathbb{Z}$
of $F$ of the segments
$\gamma\left(\mathbb{R}_{\geq0}^{4}\cap P^{-1}(f_{i})\right)$,
$i=1,\ldots ,5$. This implies that $V\simeq V(Y_{2,r},\DD_{2,r})$

Now recall from the proof of Theorem \ref{thm:real-AH-Main} and Remark
\ref{rem:semi-inv-w1} that $X_P$ is then $\mathbb{S}^1$-equivariantly
isomorphic to $X(Z_{2,r,P},(\mathcal{D}_{2,r},s\sigma_P^*s))$, where
$s$ is the semi-invariant rational function of weight $1$ on
$\mathbb{A}_{\mathbb{C}}^{4}$ which provides the identification
$\mathbb{C}[a,b,x,y]_m=\Gamma(Y_{2,r},\mathcal{O}_{Y_{2,r}}(\mathcal{D}_{2,r}(m)))\cdot
s^m$ for every $m\in \mathbb{Z}$. Our choice of section $\gamma$ of
$F$ in the toric downgrading method used to construct $\DD_{2,r}$
corresponds to the choice $s=b^rx$, and the fact that
$h_P=(b^rx)\sigma_P^*(b^rx)$ then follows from a direct calculation.
\end{proof}

\bibliographystyle{amsalpha}

\end{document}